\begin{document}

\def\fl#1{\left\lfloor#1\right\rfloor}
\def\cl#1{\left\lceil#1\right\rceil}
\def\ang#1{\left\langle#1\right\rangle}
\def\stf#1#2{\left[#1\atop#2\right]}
\def\sts#1#2{\left\{#1\atop#2\right\}}
\def\eul#1#2{\left\langle#1\atop#2\right\rangle}
\def\N{\mathbb N}
\def\Z{\mathbb Z}
\def\R{\mathbb R}
\def\C{\mathbb C}
\newcommand{\ctext}[1]{\raise0.2ex\hbox{\textcircled{\scriptsize{#1}}}}

\newtheorem{theorem}{Theorem}
\newtheorem{Prop}{Proposition}
\newtheorem{Cor}{Corollary}
\newtheorem{Lem}{Lemma}
\newtheorem{Sublem}{Sublemma}
\newtheorem{Def}{Definition}
\newtheorem{Conj}{Conjecture}

\newenvironment{Rem}{\begin{trivlist} \item[\hskip \labelsep{\it
Remark.}]\setlength{\parindent}{0pt}}{\end{trivlist}}

\title{Congruence properties of Lehmer-Euler numbers 
}

\author{
Takao Komatsu
\\
\small Faculty of Education\\[-0.8ex]
\small Nagasaki University\\[-0.8ex]
\small Nagasaki 852-8521 Japan\\[-0.8ex]
\small \texttt{komatsu@nagasaki-u.ac.jp}\\\\
Guo-Dong Liu\\
\small Department of Mathematics and Statistics\\[-0.8ex]
\small Huizhou University\\[-0.8ex]
\small Huizhou, Guangdong 516007 China\\[-0.8ex]
\small \texttt{lgd@hzu.edu.cn}
}

\date{
}
\maketitle

\begin{abstract}
Certain generalization of Euler numbers was defined in 1935 by Lehmer using cubic roots of unity, as a natural generalization of Bernoulli and Euler numbers. 
In this paper, Lehmer's generalized Euler numbers are studied to give certain congruence properties together with recurrence and explicit formulas of the numbers. We also show a new polynomial sequence and its properties. Some identities including Euler and central factorial numbers are obtained. \\
{\bf Keywords:} Lehmer numbers, Euler numbers, Bernoulli numbers, congruence property, determinant expression, recurrence formula, central factorial numbers 
\end{abstract}

\section{Introduction}

In 1935, D. H. Lehmer \cite{Lehmer} introduced and investigated generalized Euler numbers $W_n$, 
which are defined as
$$
\sum_{n=0}^\infty W_n\frac{t^n}{n!}=\frac{3}{e^{t}+e^{\omega t}+e^{\omega^2 t}}=\left(\sum_{l=0}^\infty\frac{t^{3l}}{(3l)!}\right)^{-1},
$$ 
where $\omega=\frac{-1+\sqrt{-3}}{2}$ and $\omega^2=\bar\omega=\frac{-1-\sqrt{-3}}{2}$ are the cubic roots of unity.  
Note that $W_n=0$ if and only if $3\nmid n$, and $(-1)^n W_{3 n}>0$.   
They can be considered as analogues of Euler numbers $E_n$ defined as 
$$
\sum_{n=0}^\infty E_n\frac{t^n}{n!}=\frac{2}{e^t+e^{-t}}=\frac{1}{\cosh t}=\left(\sum_{l=0}^\infty\frac{t^{2l}}{(2l)!}\right)^{-1}, 
$$  
and its subsequence of the absolute values of its non-zero elements $\{\mathcal W_{3 n}\}_{n\ge 0}$, where $\mathcal W_n:=(-1)^n W_{n}$, is recorded in \cite[A002115]{oeis}: 
{\small  
\begin{multline*} 
1, 1, 19, 1513, 315523, 136085041, 105261234643, 132705221399353,  \\
254604707462013571, 705927677520644167681, 2716778010767155313771539, \dots\,. 
\end{multline*} 
} 
We shall call $W_n$ the ($n$th) {\it Lehmer-Euler number} in this paper for short. 

In \cite{BK} several properties on determinants and representations of the Lehmer-Euler numbers are studied.  Some generalizations are shown in \cite{KP}.  
In this paper, we show several congruence properties together with recurrence and explicit formulas of the numbers.    

\section{Basic properties of $W_n$} 

In this section, we remember some fundamental properties of Lehmer-Euler numbers $W_n$ from \cite{BK,KP}. They are used to show congruence properties in the next main section.  

\begin{theorem}\label{thm:3property}
\begin{itemize}
\item[\rm (i)](Recurrence formula)
We have $W_0=1$ and
$$
W_{3 n}=-\sum_{k=0}^{n-1}\binom{3 n}{3 k}W_{3 k}\quad(n\ge 1).  
$$  
\label{prop:ex3}
\item[\rm (ii)](Explicit formula)
For $n\ge 1$, we have
$$
W_{3 n}=(3 n)!\sum_{k=1}^n(-1)^k\sum_{i_1+\cdots+i_k=n\atop i_1,\dots,i_k\ge 1}\frac{1}{(3 i_1)!\cdots(3 i_k)!}\,.  
$$ 
\label{th:ex1}
\item[\rm (iii)](Determinant expression)
For $n\ge 1$, we have
$$
W_{3n}=(-1)^{n}(3 n)!\left|
\begin{array}{ccccc}
\frac{1}{3!}&1&0&&\\
\frac{1}{6!}&\frac{1}{3!}&&&\\
\vdots&\vdots&\ddots&1&0\\
\frac{1}{(3 n-3)!}&\frac{1}{(3 n-6)!}&\cdots&\frac{1}{3!}&1\\
\frac{1}{(3 n)!}&\frac{1}{(3 n-3)!}&\cdots&\frac{1}{6!}&\frac{1}{3!}
\end{array}
\right|\,. 
$$ 
\label{th:det1} 
\end{itemize}
\end{theorem}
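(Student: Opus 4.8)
The plan is to work throughout with the reciprocal relation $f(t)g(t)=1$, where $f(t)=\sum_{n\ge0}W_nt^n/n!$ is the defining exponential generating function and $g(t)=\sum_{l\ge0}t^{3l}/(3l)!$; since $W_n=0$ whenever $3\nmid n$, it is natural to pass to the normalized quantities $w_n:=W_{3n}/(3n)!$ and $a_j:=1/(3j)!$, so that $a_0=w_0=1$. For part (i) I would compare the coefficients of $t^{3n}$ on the two sides of $f(t)g(t)=1$. Because $W_j=0$ unless $3\mid j$, the Cauchy product collapses to a sum over multiples of $3$, giving $\sum_{k=0}^n a_{n-k}w_k=0$ for every $n\ge1$. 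Multiplying by $(3n)!$ and using $(3n)!/\bigl((3k)!\,(3n-3k)!\bigr)=\binom{3n}{3k}$ turns this into $\sum_{k=0}^n\binom{3n}{3k}W_{3k}=0$; isolating the term $k=n$, where $\binom{3n}{3n}=1$, yields the recurrence, while $W_0=1$ is immediate from $f(0)=1$.

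For part (ii) I would instead expand the reciprocal as a geometric series. Writing $g(t)=1+h(t)$ with $h(t)=\sum_{l\ge1}t^{3l}/(3l)!$, one has $f(t)=\bigl(1+h(t)\bigr)^{-1}=\sum_{k\ge0}(-1)^k h(t)^k$, a legitimate identity of formal power series since $h(0)=0$. The coefficient of $t^{3n}$ in $h(t)^k$ is precisely $\sum_{i_1+\cdots+i_k=n,\ i_j\ge1}1/\bigl((3i_1)!\cdots(3i_k)!\bigr)$, and for fixed $n\ge1$ only the indices $1\le k\le n$ contribute, since the $k=0$ term lives in degree $0$ while $k>n$ admits no composition of $n$ into $k$ positive parts. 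Equating this with the coefficient $W_{3n}/(3n)!$ of $f(t)$ and clearing the factor $(3n)!$ produces the explicit formula.

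For part (iii) the recurrence of (i), in normalized form, reads $w_n=-\sum_{k=1}^n a_kw_{n-k}$ with $w_0=1$, so it suffices to establish the general fact that such a convolution inverse is a Hessenberg determinant. Concretely, I would prove by induction on $n$ the standard lemma that if $R_0=1$ and $R_n=-\sum_{k=1}^n a_kR_{n-k}$ for $n\ge1$, then $R_n=(-1)^nD_n$, where $D_n$ is the $n\times n$ determinant of the matrix with entries $a_{i-j+1}$ for $i\ge j$, entries $1$ on the superdiagonal $j=i+1$, and $0$ above it. The computational heart is the recurrence $D_n=\sum_{k=1}^n(-1)^{k-1}a_kD_{n-k}$ with $D_0=1$; substituting $R_m=(-1)^mD_m$ into it then matches the two recursions once the sign $(-1)^{k-1}$ is tracked, and specializing $a_j=1/(3j)!$, $R_n=W_{3n}/(3n)!$ and multiplying by $(3n)!$ reproduces the displayed determinant. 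The step requiring genuine care is deriving the recurrence for $D_n$: expanding along the last column removes the diagonal entry $a_1$, contributing $a_1D_{n-1}$, together with the superdiagonal $1$, but the remaining minor is again lower-Hessenberg with a shifted bottom row rather than one of the $D_m$, so I would peel off the terms $-a_2D_{n-2},\,+a_3D_{n-3},\dots$ one at a time by iterating the last-column expansion, the alternating signs arising from the fixed parity of the repeatedly deleted superdiagonal $1$. Once this bookkeeping is complete the induction closes and all three parts follow.
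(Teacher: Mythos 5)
Your argument is correct, but there is nothing in the paper to compare it against: Section 2 explicitly ``remembers'' this theorem from the references \cite{BK,KP} and gives no proof, so yours is a self-contained reconstruction rather than a parallel to anything in the text. On its own terms it is sound. Part (i) is the Cauchy-product coefficient extraction from $f(t)g(t)=1$, and your use of $W_j=0$ for $3\nmid j$ is legitimate since $g$ is a power series in $t^3$, hence so is its reciprocal $f$. Part (ii) is the formal geometric series $\bigl(1+h(t)\bigr)^{-1}=\sum_{k\ge 0}(-1)^k h(t)^k$, valid because $h(0)=0$, with the degree count (lowest degree of $h^k$ is $3k$) correctly limiting the sum to $1\le k\le n$. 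Part (iii) reduces to the standard equivalence between a convolution recurrence and a lower-Hessenberg determinant, and your lemma and sign-tracking are right. One simplification worth adopting there: rather than expanding $D_n$ along the last column and ``peeling off'' the shifted bottom row---which works but forces the iterated bookkeeping you describe---expand along the \emph{first} column. Deleting row $i$ and column $1$ leaves a block-triangular matrix whose upper-left $(i-1)\times(i-1)$ block is unitriangular (the superdiagonal $1$'s) and whose lower-right block is exactly the Hessenberg matrix for $D_{n-i}$, so $D_n=\sum_{i=1}^n(-1)^{i-1}a_i D_{n-i}$ drops out in one line; substituting $R_m=(-1)^m D_m$ then closes the induction immediately, with $a_j=1/(3j)!$ and $R_n=W_{3n}/(3n)!$ giving the displayed determinant.
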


By using Trudi's formula (\cite{Brioschi,Muir,Trudi}) we can obtain another explicit expression and inversion relation of $W_n$. 

\begin{theorem}  
For $n\ge 1$, we have
\begin{equation*} 
W_{3n}=(3 n)!\sum_{t_1+2 t_2+\cdots+n t_n=n}\binom{t_1+\cdots+t_n}{t_1,\dots,t_n}
\prod_{l=1}^{n}\left(\frac{-1}{(3l)!}\right)^{t_l}\,. 
\end{equation*} 
\label{th:otherexp}
\end{theorem}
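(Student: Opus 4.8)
The plan is to obtain the formula as a direct specialization of Trudi's formula applied to the Hessenberg determinant of Theorem~\ref{thm:3property}(iii). Recall that Trudi's formula evaluates a lower Hessenberg Toeplitz determinant as
$$
\left|\begin{array}{ccccc}
a_1&1&0&\cdots&0\\
a_2&a_1&1&&\vdots\\
\vdots&&\ddots&\ddots&0\\
a_{n-1}&a_{n-2}&&a_1&1\\
a_n&a_{n-1}&\cdots&a_2&a_1
\end{array}\right|
=\sum_{t_1+2t_2+\cdots+nt_n=n}\binom{t_1+\cdots+t_n}{t_1,\dots,t_n}(-1)^{n-(t_1+\cdots+t_n)}\prod_{l=1}^n a_l^{t_l},
$$
which can be justified either by Laplace expansion along the last column together with induction on $n$, or, more transparently, by reading off the coefficient of $x^n$ in the reciprocal power series $\bigl(1+\sum_{l\ge1}a_l x^l\bigr)^{-1}$. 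First I would quote this identity (citing \cite{Brioschi,Muir,Trudi}) and observe that it applies verbatim to the matrix in Theorem~\ref{thm:3property}(iii).

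Next I would set $a_l=\frac{1}{(3l)!}$ for $l=1,\dots,n$, which makes the matrix in Trudi's formula coincide exactly with the one in Theorem~\ref{thm:3property}(iii): the first column reads $\frac1{3!},\frac1{6!},\dots,\frac1{(3n)!}$, the main diagonal is constant equal to $a_1=\frac1{3!}$, and the superdiagonal carries $1$'s. Denoting that determinant by $D_n$, the specialization gives
$$
D_n=\sum_{t_1+2t_2+\cdots+nt_n=n}\binom{t_1+\cdots+t_n}{t_1,\dots,t_n}(-1)^{n-(t_1+\cdots+t_n)}\prod_{l=1}^n\left(\frac{1}{(3l)!}\right)^{t_l}.
$$

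Finally I would combine this with the prefactor supplied by Theorem~\ref{thm:3property}(iii), namely $W_{3n}=(-1)^n(3n)!\,D_n$. The only genuine bookkeeping is the sign reconciliation: since $(-1)^n(-1)^{n-(t_1+\cdots+t_n)}=(-1)^{2n-(t_1+\cdots+t_n)}=(-1)^{t_1+\cdots+t_n}=\prod_{l=1}^n(-1)^{t_l}$, the factor $(-1)^{t_1+\cdots+t_n}$ is absorbed into the product, converting each $\left(\frac{1}{(3l)!}\right)^{t_l}$ into $\left(\frac{-1}{(3l)!}\right)^{t_l}$ and yielding exactly the claimed formula. Because the whole argument is a one-step substitution, I do not anticipate a real obstacle; the only place demanding care is this collapse of the two sign factors $(-1)^{n-\sum t_l}$ and $(-1)^n$ into the clean $\prod_l(-1)^{t_l}$. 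As a sanity check I would confirm the first two cases against the recurrence in Theorem~\ref{thm:3property}(i): for $n=1$ the single term $t_1=1$ gives $W_3=3!\cdot(-\frac1{3!})=-1$, and for $n=2$ the terms $(t_1,t_2)=(2,0)$ and $(0,1)$ give $W_6=6!\bigl((\tfrac1{3!})^2-\tfrac1{6!}\bigr)=19$, both matching the known values.
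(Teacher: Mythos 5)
Your proposal is correct and takes essentially the same route as the paper, which obtains this theorem precisely by applying Trudi's formula to the Hessenberg determinant of Theorem~\ref{thm:3property}(iii); indeed the paper leaves the substitution implicit, while you have spelled it out. Your sign bookkeeping, $(-1)^n(-1)^{n-(t_1+\cdots+t_n)}=(-1)^{t_1+\cdots+t_n}$, and your choice $a_l=\frac{1}{(3l)!}$ are exactly what is needed, and the numerical checks $W_3=-1$, $W_6=19$ confirm the result.
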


By using the inversion formula (see, e.g. \cite{KR}), we obtain the following.  

\begin{Prop}  
For $n\ge 1$, we have 
$$ 
\frac{(-1)^n}{(3 n)!}=\left|\begin{array}{ccccc}
\frac{W_3}{3!}&1&0&&\\
\frac{W_6}{6!}&\frac{W_3}{3!}&&&\\
\vdots&\vdots&\ddots&1&0\\
\frac{W_{3 n-3}}{(3 n-3)!}&\frac{W_{3 n-6}}{(3 n-6)!}&\cdots&\frac{W_3}{3!}&1\\
\frac{W_{3 n}}{(3 n)!}&\frac{W_{3 n-3}}{(3 n-3)!}&\cdots&\frac{W_6}{6!}&\frac{W_3}{3!}  
\end{array}\right|\,. 
$$ 
\label{th400}  
\end{Prop}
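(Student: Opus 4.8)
The plan is to read the determinant in Theorem~\ref{th:det1} as a Toeplitz--Hessenberg determinant and then run the inversion formula of \cite{KR} in the reverse direction. First I would set $a_m=\frac{1}{(3m)!}$ for $m\ge 0$ (so that $a_0=1$) and put $A(x)=\sum_{m\ge 0}a_m x^m=\sum_{m\ge 0}\frac{x^m}{(3m)!}$. Substituting $x=t^3$ in Lehmer's generating function (and using $W_n=0$ for $3\nmid n$) shows that the numbers $b_n:=\frac{W_{3n}}{(3n)!}$ are exactly the coefficients of the reciprocal series, i.e. $\sum_{n\ge 0}b_n x^n=1/A(x)$; equivalently this is the statement of Theorem~\ref{th:ex1}. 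Since $W_0=1$ by Theorem~\ref{prop:ex3}(i) we have $b_0=1=a_0$, so $\{a_m\}$ and $\{b_m\}$ are \emph{mutually reciprocal} sequences, each with constant term $1$, satisfying $\sum_{k=0}^{n}a_k b_{n-k}=\delta_{n,0}$.

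Next I would invoke the inversion formula in the following form: for two sequences with constant term $1$ that are convolution inverses of one another, the Toeplitz--Hessenberg determinant built from one sequence reproduces the other sequence up to the sign $(-1)^n$. Concretely, writing $T_n(c_1,\dots,c_n)$ for the $n\times n$ lower Hessenberg determinant whose $(i,j)$-entry is $c_{i-j+1}$ when $j\le i$, is $1$ when $j=i+1$, and is $0$ otherwise, the formula asserts both $T_n(a_1,\dots,a_n)=(-1)^n b_n$ and, symmetrically, $T_n(b_1,\dots,b_n)=(-1)^n a_n$. The first of these is precisely Theorem~\ref{th:det1} after extracting the factor $(3n)!$, which fixes the sign convention; the content of the Proposition is the second identity.

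The symmetry is the only point that needs care, and it is immediate: the relation $\sum_{k=0}^{n}a_k b_{n-k}=\delta_{n,0}$ is symmetric in $a$ and $b$, and the reciprocal of $1/A(x)$ is again $A(x)$, so the roles of $\{a_m\}$ and $\{b_m\}$ may be interchanged with no change to the sign bookkeeping. Applying the second identity therefore gives
\[
T_n\!\left(\frac{W_3}{3!},\dots,\frac{W_{3n}}{(3n)!}\right)=(-1)^n a_n=\frac{(-1)^n}{(3n)!},
\]
and recognizing the left-hand determinant as the displayed matrix completes the proof. The main (and modest) obstacle is pinning down that the sign exponent in \cite{KR} is the one making Theorem~\ref{th:det1} hold; once it is verified on the known direction $a\mapsto b$, the reverse direction $b\mapsto a$ inherits exactly the same exponent $(-1)^n$, so no separate sign computation is required. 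As an independent sanity check, one can also obtain the identity by induction on $n$, expanding $T_n$ along its last row and using the recurrence $W_0=1$, $W_{3n}=-\sum_{k=0}^{n-1}\binom{3n}{3k}W_{3k}$ of Theorem~\ref{prop:ex3}(i).
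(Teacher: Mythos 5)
Your proposal is correct and matches the paper's approach: the paper proves this Proposition precisely by appealing to the inversion formula of \cite{KR}, applied to the mutually reciprocal sequences $a_m=\frac{1}{(3m)!}$ and $b_m=\frac{W_{3m}}{(3m)!}$, exactly as you do. Your additional care about the sign exponent and the symmetry of the convolution relation is sound, and the induction sanity check via Theorem~\ref{prop:ex3}(i) is a valid alternative route, though not needed.
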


\section{Congruence properties}  

In this section, we give some congruence properties of $W_n$.  

\begin{theorem}  
For any non-negative integer $n$, we have 
$$
W_{3n}\in \mathbb{Z}\quad {\rm and}\quad 
W_{3 n}\equiv(-1)^n\pmod 9\,.
$$ 
\label{th:congmod3}
\end{theorem}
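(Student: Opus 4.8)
The plan is to establish both assertions simultaneously by induction on $n$, driving everything through the recurrence of Theorem~\ref{thm:3property}(i), namely $W_0=1$ and $W_{3n}=-\sum_{k=0}^{n-1}\binom{3n}{3k}W_{3k}$ for $n\ge 1$. Integrality is immediate: $W_0=1\in\mathbb{Z}$, and if $W_{3k}\in\mathbb{Z}$ for all $k<n$, then $W_{3n}$ is an integer combination of the $W_{3k}$ with binomial-coefficient weights, hence an integer.

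For the congruence, assume inductively that $W_{3k}\equiv(-1)^k\pmod 9$ for $0\le k\le n-1$ (the base case being $W_0=1=(-1)^0$). Substituting into the recurrence gives $W_{3n}\equiv-\sum_{k=0}^{n-1}\binom{3n}{3k}(-1)^k\pmod 9$, and since the omitted $k=n$ term equals $\binom{3n}{3n}(-1)^n=(-1)^n$, we have $-\sum_{k=0}^{n-1}\binom{3n}{3k}(-1)^k=(-1)^n-S_n$ where
$$
S_n:=\sum_{k=0}^{n}\binom{3n}{3k}(-1)^k .
$$
Thus $W_{3n}\equiv(-1)^n-S_n$, and the whole theorem reduces to the single divisibility statement $S_n\equiv 0\pmod 9$ for $n\ge 1$, which I expect to be the crux.

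To evaluate $S_n$ I would apply a cube-root-of-unity filter. Since $(-1)^k=(-1)^{3k}$, using $\frac13(1+\omega^j+\omega^{2j})$ as the indicator of $3\mid j$ yields
$$
S_n=\sum_{\substack{0\le j\le 3n\\ 3\mid j}}\binom{3n}{j}(-1)^j=\frac13\left[(1-1)^{3n}+(1-\omega)^{3n}+(1-\omega^2)^{3n}\right]=\frac13\left[(1-\omega)^{3n}+(1-\omega^2)^{3n}\right],
$$
the first term vanishing for $n\ge 1$. Now set $u=(1-\omega)^3$ and $v=(1-\omega^2)^3$. Using only $1+\omega+\omega^2=0$ and $\omega^3=1$, one finds $(1-\omega)(1-\omega^2)=3$, hence $uv=[(1-\omega)(1-\omega^2)]^3=27$, while $x^3+y^3=(x+y)(x^2-xy+y^2)$ with $x=1-\omega$, $y=1-\omega^2$ gives $x+y=3$, $xy=3$, so $x^2-xy+y^2=0$ and therefore $u+v=0$. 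Consequently $a_n:=u^n+v^n=3S_n$ satisfies the linear recurrence $a_n=-27\,a_{n-2}$ with $a_0=2$, $a_1=0$.

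Solving this recurrence closes the argument: $a_n=0$ when $n$ is odd, so $S_n=0$; and $a_{2m}=2(-27)^m$, so $S_{2m}=2(-1)^m 3^{3m-1}$. In either case $9\mid S_n$ for every $n\ge1$ (for $n=2m$ because $3m-1\ge 2$), which is exactly the required divisibility and completes the induction. The main obstacle is the closed evaluation of the alternating binomial sum $S_n$; the roots-of-unity filter converts it into a tractable two-term expression, and the choice $z=-1$ is precisely what makes one of the three contributions drop out, exposing the factor $3^{3m-1}$ responsible for divisibility by $9$.
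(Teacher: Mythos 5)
Your proposal is correct and follows essentially the same route as the paper: induction on the recurrence $W_{3n}=-\sum_{k=0}^{n-1}\binom{3n}{3k}W_{3k}$, with the crux being the cube-root-of-unity filter evaluation of $S_n=\sum_{k=0}^{n}\binom{3n}{3k}(-1)^k$, which vanishes for odd $n$ and equals $2(-1)^{m}3^{3m-1}$ for $n=2m$. The only cosmetic difference is that you evaluate $(1-\omega)^{3n}+(1-\omega^2)^{3n}$ via the symmetric functions $u+v=0$, $uv=27$ and the recurrence $a_n=-27a_{n-2}$, whereas the paper computes the powers explicitly as $(\mp 3\sqrt{-3})^{n}$; the resulting closed forms are identical.
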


\begin{proof}  
By Theorem \ref{prop:ex3} (i), for $n>0$, we have 
$$ 
W_{3 n}=-\sum_{k=0}^{n-1}\binom{3 n}{3 k}W_{3 k}\,.  
$$  
Since $\binom{3 n}{3 k}$ is an integer and $W_0=1$, $W_{3n}$ is an integer for all nonnegative integer $n$.  
By  
$$
\sum_{k=0}^n\binom{3 n}{3 k}x^{3 k}=\frac{1}{3}\sum_{j=0}^2(1+\omega^j x)^{3 n}\,, 
$$ 
we get  
\begin{align}  
\sum_{k=0}^n\binom{3 n}{3 k}(-1)^k&=\frac{1}{3}\sum_{j=0}^2(1-\omega^j)^{3 n}
=3^{n-1}\bigl((-\sqrt{-3})^{n}+(\sqrt{-3})^{n}\bigr)\,. 
\label{eq:oh01}
\end{align}  
Hence, if $n$ is odd positive, then 
$$
\sum_{k=0}^n\binom{3 n}{3 k}(-1)^k=0\,, 
$$ 
and if $n=2 n'$ is even positive, then 
$$
\sum_{k=0}^n\binom{3 n}{3 k}(-1)^k=2(-1)^{n'}3^{3 n'-1}\equiv 0\pmod 9\,. 
$$ 
By induction with Theorem \ref{prop:ex3} (i), for $n>0$, we have   
$$   
W_{3 n}\equiv(-1)^n-\sum_{k=0}^{n}\binom{3 n}{3 k}(-1)^k \equiv(-1)^n\pmod 9\,. 
$$  
\end{proof}

From Theorem \ref{th:congmod3}, for $m\ge 0$, we have 
$$
W_{6 m}\equiv 1\quad\hbox{and}\quad W_{6 m+3}\equiv -1\pmod{3^2}\,.  
$$

For a prime $p$, the following Lucas's Theorem (\cite{Lucas}, see also \cite{Loveless}) is well-known. 

\begin{Lem}[Lucas's Theorem]  
If $p$ is a prime and $m$ and $n$ are non-negative integers with base $p$ representations $m=m_r p^r+\cdots+m_1 p+m_0$ and $n=n_r p^r+\cdots+n_1 p+n_0$, respectively, then 
$$
\binom{m}{n}=\binom{m_r p^r+\cdots+m_1 p+m_0}{n_r p^r+\cdots+n_1 p+n_0}\equiv\prod_{i=0}^r\binom{m_i}{n_i}\pmod p\,. 
$$ 
\label{Lucas}
\end{Lem}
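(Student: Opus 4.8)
The plan is to prove the congruence by comparing coefficients in the polynomial identity for $(1+x)^m$ reduced modulo $p$. First I would record the elementary fact that underlies everything: for $0<k<p$ the prime $p$ divides $\binom{p}{k}$ (since $k\binom{p}{k}=p\binom{p-1}{k-1}$ and $\gcd(k,p)=1$), whence we have the Frobenius-type congruence
$$
(1+x)^p\equiv 1+x^p\pmod p
$$
as a coefficientwise identity of polynomials in $\Z[x]$. Since coefficientwise congruence modulo $p$ is preserved under products, raising this relation to the $p$th power repeatedly and reducing each time gives, by induction on $i$,
$$
(1+x)^{p^i}\equiv 1+x^{p^i}\pmod p\qquad(i\ge 0)\,,
$$
because $(1+x)^{p^{i+1}}=\bigl((1+x)^{p^i}\bigr)^p\equiv\bigl(1+x^{p^i}\bigr)^p\equiv 1+x^{p^{i+1}}\pmod p$.

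Next I would expand $(1+x)^m$ using the base-$p$ representation $m=\sum_{i=0}^r m_i p^i$. Writing
$$
(1+x)^m=\prod_{i=0}^r\bigl((1+x)^{p^i}\bigr)^{m_i}
$$
and substituting the congruence above factor by factor yields
$$
(1+x)^m\equiv\prod_{i=0}^r\bigl(1+x^{p^i}\bigr)^{m_i}=\prod_{i=0}^r\sum_{j_i=0}^{m_i}\binom{m_i}{j_i}x^{j_i p^i}\pmod p\,.
$$
On the left-hand side the coefficient of $x^n$ is exactly $\binom{m}{n}$, so it remains only to identify the coefficient of $x^n$ on the right, where $n=\sum_{i=0}^r n_i p^i$.

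Finally I would match coefficients of $x^n$. A monomial $x^n$ arises on the right precisely from tuples $(j_0,\dots,j_r)$ with $0\le j_i\le m_i$ and $\sum_{i=0}^r j_i p^i=n$. Here is the crux of the argument: because each $m_i$ is a base-$p$ digit we have $0\le j_i\le m_i\le p-1$, and the representation of $n$ in base $p$ with digits in $\{0,1,\dots,p-1\}$ is unique, so the only admissible tuple is $j_i=n_i$ for all $i$. Hence the coefficient of $x^n$ on the right equals $\prod_{i=0}^r\binom{m_i}{n_i}$, with the convention $\binom{m_i}{n_i}=0$ when $n_i>m_i$ (this case correctly forces $\binom{m}{n}\equiv 0$, since then no admissible tuple exists). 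Comparing the two sides gives the claimed congruence. The step to handle with care is exactly this uniqueness: one must invoke $m_i\le p-1$ to rule out the "carries" that would otherwise let several distinct tuples $(j_0,\dots,j_r)$ produce the same exponent $n$, which is the only place where the hypothesis that the $m_i$ and $n_i$ are genuine base-$p$ digits enters.
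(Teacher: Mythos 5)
Your proof is correct and complete. Note, however, that the paper does not actually prove this statement: Lucas's Theorem is quoted as a known lemma, with references to Lucas's original 1878 paper and to Loveless, and is then used as a tool in the proof of Theorem \ref{th:9}. So there is no proof in the paper to compare yours against; what you have supplied is the standard generating-function argument, and it is the classical route. All the steps check out: the divisibility $p\mid\binom{p}{k}$ for $0<k<p$ via $k\binom{p}{k}=p\binom{p-1}{k-1}$, the induction giving $(1+x)^{p^i}\equiv 1+x^{p^i}\pmod p$, the factorization $(1+x)^m=\prod_{i=0}^r\bigl((1+x)^{p^i}\bigr)^{m_i}$, and the coefficient extraction. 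You are also right to flag the uniqueness step as the crux: since each $j_i$ satisfies $0\le j_i\le m_i\le p-1$, the tuple $(j_0,\dots,j_r)$ is a genuine base-$p$ digit string for $n$, and uniqueness of base-$p$ representation forces $j_i=n_i$, eliminating any contribution from carries; the degenerate case $n_i>m_i$ is correctly absorbed by the convention $\binom{m_i}{n_i}=0$, which makes both sides vanish. Your argument could stand verbatim as a self-contained proof of the lemma that the paper leaves to the references.
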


By using Lemma \ref{Lucas}, we have the following result.

\begin{theorem} 
For $n\ge 0$, we have 
$$
W_{9 n}\equiv(-1)^n,\quad W_{9 n+3}=(-1)^{n-1},\quad W_{9 n+6}\equiv(-1)^{n-1}8\pmod{3^3}\,. 
$$
In particular, for $m\ge 0$, 
\begin{align*}
&W_{18 m}\equiv 1,\quad W_{18 m+3}\equiv -1,\quad W_{18 m+6}\equiv -8\,,\\
&W_{18 m+9}\equiv -1,\quad W_{18 m+12}\equiv 1,\quad W_{18 m+15}\equiv 8\pmod{3^3}\,. 
\end{align*} 
\label{th:9}
\end{theorem}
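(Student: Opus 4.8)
The plan is to collapse the six stated congruences into three, collect those three into one statement about a single explicit residue sequence, and then prove that statement by induction through the recurrence of Theorem~\ref{prop:ex3}(i), reducing the inductive step to a purely combinatorial congruence that I verify with Lucas's Theorem and the root-of-unity evaluation already performed in the proof of Theorem~\ref{th:congmod3}. First I would note that the six ``in particular'' congruences are just the cases $n$ even / $n$ odd of the three congruences $W_{9n}\equiv(-1)^n$, $W_{9n+3}\equiv(-1)^{n-1}$, $W_{9n+6}\equiv(-1)^{n-1}8\pmod{3^3}$, so it suffices to prove these three. Writing a general multiple of $3$ as $3m$ and amalgamating the three cases, let $f(m)$ be the claimed residue of $W_{3m}$ modulo $27$; explicitly $f$ is the period-$6$ sequence with $f(m)\equiv 1,-1,-8,-1,1,8$ according to $m\equiv 0,1,2,3,4,5\pmod 6$.

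Next I would run a strong induction on $m$ using the recurrence in the form $\sum_{k=0}^m\binom{3m}{3k}W_{3k}=0$. If $W_{3k}\equiv f(k)\pmod{27}$ for all $k<m$, then since the binomial coefficients are integers the recurrence yields $W_{3m}\equiv-\sum_{k=0}^{m-1}\binom{3m}{3k}f(k)\pmod{27}$, so proving $W_{3m}\equiv f(m)$ becomes equivalent to the purely combinatorial congruence
\begin{equation*}
\sum_{k=0}^m\binom{3m}{3k}f(k)\equiv 0\pmod{27}\qquad(m\ge 1),
\end{equation*}
which refers only to the explicit sequence $f$ and hence carries no circularity. The base case $W_0=1=f(0)$ starts the induction.

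To establish this congruence I would peel off the main term by writing $f(k)=(-1)^k+9\,d(k)$, where a direct check gives $d(k)=-1$ for $k\equiv 2\pmod 6$, $d(k)=+1$ for $k\equiv 5\pmod 6$, and $d(k)=0$ otherwise; equivalently $d(3j+2)=-(-1)^j$ and $d$ vanishes off $k\equiv 2\pmod 3$. The $(-1)^k$ part contributes exactly $S_m:=\sum_{k=0}^m\binom{3m}{3k}(-1)^k$, evaluated in \eqref{eq:oh01}: it is $0$ for odd $m$ and $2(-1)^{m'}3^{3m'-1}$ for $m=2m'$, so $S_m\equiv 0\pmod{27}$ for every $m\ge1$ except $m=2$, where $S_2=-18$. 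The remaining part carries a factor $9$, so I only need it modulo $3$; here Lucas's Theorem (Lemma~\ref{Lucas}) gives $\binom{3m}{3k}\equiv\binom{m}{k}\pmod 3$, whence $9\sum_k\binom{3m}{3k}d(k)\equiv-9\,T_m\pmod{27}$ with $T_m:=\sum_{j}\binom{m}{3j+2}(-1)^j$. Thus the displayed congruence is equivalent to $S_m\equiv 9\,T_m\pmod{27}$.

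The final step, which I expect to be the main obstacle, is to control $T_m$ modulo $3$. I would apply the root-of-unity filter with the primitive sixth root $\xi=e^{\pi i/3}$, so that $\xi^2=\omega$ and $\xi\omega=\xi^3=-1$; the three cube roots of $-1$ are $\xi\omega^s$ ($s=0,1,2$), and the $s=1$ term drops out since $1+\xi\omega=0$, leaving a closed form from which one reads off that $T_m\equiv 0\pmod 3$ for all $m\ge 3$, while $T_1=0$ and $T_2=1$. Reading off the cases then finishes the argument: for $m\ge 3$ both $S_m\equiv 0$ and $9T_m\equiv 0\pmod{27}$; for $m=1$ both sides vanish; and for $m=2$ one checks $S_2-9T_2=-18-9=-27\equiv 0\pmod{27}$. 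This proves the combinatorial congruence for all $m\ge1$, the induction closes, and the three target congruences—hence the six special cases—follow.
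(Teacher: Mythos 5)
Your proof is correct, and it takes a genuinely different route from the paper's. The paper proves the three congruences by three parallel inductions: for each of $W_{9n}$, $W_{9n+3}$, $W_{9n+6}$ it splits the recurrence of Theorem \ref{prop:ex3} (i) according to the residue of the summation index modulo $9$, and evaluates in closed form eight separate root-of-unity sums of the type $\sum_k\binom{3n+i}{3k+j}(\pm 1)^k$; to convert these into statements about the sums $\sum_k\binom{9n+3i}{9k+3j}(\pm 1)^k$ that actually occur, it appeals to Lucas's Theorem together with the further assertion that congruences such as $\binom{9n}{9k+3\ell}\equiv\binom{3n}{3k+\ell}$ hold modulo $3^l$ ``if $n$ is enough large'' --- an assertion that Lemma \ref{Lucas} does not supply (Lucas is only a mod-$3$ statement) and that the paper never proves. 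Your decomposition $f(k)=(-1)^k+9d(k)$ sidesteps exactly this gap: the $(-1)^k$ part is disposed of by (\ref{eq:oh01}), already established in the proof of Theorem \ref{th:congmod3}, and the $d$ part carries an explicit factor $9$, so the genuine mod-$3$ Lucas congruence $\binom{3m}{3k}\equiv\binom{m}{k}\pmod 3$ is all you ever need. The one ingredient you leave as a sketch, the filter sum $T_m=\sum_j\binom{m}{3j+2}(-1)^j$, does close as you predict: with $\xi=e^{\pi i/3}$ the two surviving terms give $T_m=2\cdot 3^{m/2-1}\cos\frac{(m-4)\pi}{6}$ for $m\ge 1$, from which $T_1=0$, $T_2=1$, and $3\mid T_m$ for all $m\ge 3$ follow by inspecting the six possible values of the cosine. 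So your argument is more economical (one new filter evaluation instead of eight), unified (a single strong induction on $W_{3m}$ modulo $27$ rather than three), and logically tighter at the paper's one loosely justified step; what the paper's longer computation buys in exchange is the exact $3$-adic size of each partial sum, which it uses to record the sharper congruences for $n\ge n_0$ modulo $3^{(3n_0-1)/2}$ and the like --- information that feeds Proposition \ref{prp:lec0} and Conjecture \ref{conj:lec0} and that your mod-$27$ bookkeeping does not produce.
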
  
\begin{proof} 
When $n=0$, we know that $W_0=1$, $W_3=-1$ and $W_6=19\equiv -8\pmod{27}$. 
By Lucas's Theorem, we have for $\ell=0,1,2$
$$
\binom{9 n}{9 k+3\ell}\equiv\binom{3 n}{3 k+\ell}\pmod 3\,. 
$$ 
In fact, these congruences hold for $\pmod{3^l}$ ($\forall l\ge 1$) if $n$ is enough large.   
Since 
$$
\sum_{k=0}^{n-1}\binom{3 n}{3 k+1}x^{3 k+1}=\frac{1}{3}\sum_{j=0}^2\omega^{2 j}(1+\omega^j)^{3 n}\,,
$$
by setting $x=-1$ we have 
\begin{align*}
\sum_{k=0}^{n-1}\binom{3 n}{3 k+1}(-1)^{k-1}&=\frac{1}{3}\sum_{j=0}^2\omega^{2 j}(1+\omega^j)^{3 n}\\
&=\frac{1}{3}\bigl(\omega^2(-3\sqrt{-3})^n+\omega(3\sqrt{-3})^n\bigr)\\
&=3^{n-1}\bigl(\omega^2(-\sqrt{-3})^n+\omega(\sqrt{-3})^n\bigr)\,.
\end{align*}
Hence,  
$$
\sum_{k=0}^{n-1}\binom{3 n}{3 k+1}(-1)^{k-1}=
\begin{cases}
(-1)^{\frac{n+1}{2}}3^{\frac{3 n-1}{2}}&\text{when $n$ is odd};\\ 
(-1)^{\frac{n}{2}}3^{\frac{3 n}{2}-1}&\text{when $n$ is even}.
\end{cases}
$$
Since 
$$ 
\sum_{k=0}^{n-1}\binom{3 n}{3 k+2}x^{3 k+2}=\frac{1}{3}\sum_{j=0}^2\omega^{j}(1+\omega^j)^{3 n}\,,
\label{eq:en1}
$$
we have 
\begin{align*}
\sum_{k=0}^{n-1}\binom{3 n}{3 k+2}(-1)^{k-1}8&=-\frac{8}{3}\sum_{j=0}^2\omega^{j}(1+\omega^j)^{3 n}\\
&=-8\cdot 3^{n-1}\bigl(\omega(-\sqrt{-3})^n+\omega^2(\sqrt{-3})^n\bigr)\,.
\end{align*}
Hence, 
$$
\sum_{k=0}^{n-1}\binom{3 n}{3 k+2}(-1)^{k-1}8=
\begin{cases}
(-1)^{\frac{n+1}{2}}8\cdot 3^{\frac{3 n-1}{2}}&\text{when $n$ is odd};\\  
(-1)^{\frac{n}{2}}8\cdot 3^{\frac{3 n}{2}-1}&\text{when $n$ is even}.
\end{cases}
$$
By Theorem \ref{prop:ex3} (i) together with the proof of Theorem \ref{th:congmod3}, for $n\ge 3$, we have 
\begin{align*} 
W_{9 n}&=-\sum_{k=0}^{3 n-1}\binom{9 n}{3 k}W_{3 k}\\
&=-\sum_{k=0}^{n-1}\binom{9 n}{9 k}W_{9 k}-\sum_{k=0}^{n-1}\binom{9 n}{9 k+3}W_{9 k+3}-\sum_{k=0}^{n-1}\binom{9 n}{9 k+6}W_{9 k+6}\\
&\equiv (-1)^{n}-0-0=(-1)^n\pmod{3^3}\,.  
\end{align*}
When $n=1$, we have $W_9=-1513\equiv -1\pmod{3^3}$. 
When $n=2$, we have $W_{18}=105261234643\equiv 1\pmod{3^3}$. 
In fact, for $n\ge n_0$ 
$$
W_{9 n}\equiv\begin{cases}
(-1)^n\pmod{3^{\frac{3 n_0-1}{2}}}&\text{if $n_0$ is odd};\\
(-1)^n\pmod{3^{\frac{3 n_0}{2}-1}}&\text{if $n_0$ is even}. 
\end{cases}
$$ 

Since 
$$
\sum_{k=0}^n\binom{3 n+1}{3 k}x^{3 k}=\frac{1}{3}\sum_{j=0}^2(1+\omega^j x)^{3 n+1}\,,
$$ 
for $x=-1$, we have 
\begin{align*}  
\sum_{k=0}^n\binom{3 n+1}{3 k}(-1)^{k}&=\frac{1}{3}\sum_{j=0}^2(1-\omega^j)^{3 n+1}\\
&=\frac{1}{3}\big((-3\sqrt{-3})^n(1-\omega)+(3\sqrt{-3})^n(1-\omega^2)\bigr)\,.
\end{align*} 
Hence, 
$$
\sum_{k=0}^n\binom{3 n+1}{3 k}(-1)^{k}=\begin{cases}
(-1)^{\frac{n+1}{2}}3^{\frac{3 n-1}{2}}&\text{$n$ is odd};\\
(-1)^{\frac{n}{2}}3^{\frac{3}{2}n}&\text{$n$ is even}.
\end{cases}
$$
Since 
$$
\sum_{k=0}^n\binom{3 n+1}{3 k+1}x^{3 k+1}=\frac{1}{3}\sum_{j=0}^2\omega^{2 j}(1+\omega^j x)^{3 n+1}\,,
$$ 
we have 
\begin{align*}  
\sum_{k=0}^n\binom{3 n+1}{3 k+1}(-1)^{k-1}&=\frac{1}{3}\sum_{j=0}^2\omega^{2 j}(1-\omega^j)^{3 n+1}\\
&=\frac{1}{3}\big(\omega^2(-3\sqrt{-3})^n(1-\omega)+\omega(3\sqrt{-3})^n(1-\omega^2)\bigr)\\
&=\frac{1}{3}\big((\omega^2-1)(-3\sqrt{-3})^n+(\omega-1)(3\sqrt{-3})^n\bigr)\,.
\end{align*} 
Hence, 
$$
\sum_{k=0}^n\binom{3 n+1}{3 k+1}(-1)^{k-1}=\begin{cases}
(-1)^{\frac{n-1}{2}}3^{\frac{3 n-1}{2}}&\text{$n$ is odd};\\
(-1)^{\frac{n}{2}+1}3^{\frac{3}{2}n}&\text{$n$ is even}.
\end{cases}
$$
Since 
$$
\sum_{k=0}^{n-1}\binom{3 n+1}{3 k+2}x^{3 k+2}=\frac{1}{3}\sum_{j=0}^2\omega^j(1+\omega^j x)^{3 n+1}\,,
$$ 
we have 
\begin{align*}  
\sum_{k=0}^{n-1}\binom{3 n+1}{3 k+2}(-1)^{k-1}8&=-\frac{8}{3}\sum_{j=0}^2\omega^j(1-\omega^j)^{3 n+1}\\
&=-\frac{8}{3}\big(\omega(-3\sqrt{-3})^n(1-\omega)+\omega^2(3\sqrt{-3})^n(1-\omega^2)\bigr)\\
&=-\frac{8}{3}\big((\omega-\omega^2)(-3\sqrt{-3})^n+(\omega^2-\omega)(3\sqrt{-3})^n\bigr)\,.
\end{align*} 
Hence, 
$$
\sum_{k=0}^{n-1}\binom{3 n+1}{3 k+2}(-1)^{k}=\begin{cases}
(-1)^{\frac{n+1}{2}}16\cdot 3^{\frac{3 n-1}{2}}&\text{$n$ is odd};\\
0&\text{$n$ is even}.
\end{cases}
$$
By Theorem \ref{prop:ex3} (i), for $n\ge 2$, we have 
\begin{align*} 
W_{9 n+3}&=-\sum_{k=0}^{3 n+1}\binom{9 n+3}{3 k}W_{3 k}\\
&=-\sum_{k=0}^{n}\binom{9 n+3}{9 k}W_{9 k}-\sum_{k=0}^{n-1}\binom{9 n+3}{9 k+3}W_{9 k+3}-\sum_{k=0}^{n-1}\binom{9 n+3}{9 k+6}W_{9 k+6}\\
&\equiv -0+(-1)^{n-1}-0
=(-1)^{n-1}\pmod{3^3}\,.  
\end{align*}
When $n=1$, we have $W_{12}=315523\equiv 1\pmod{3^3}$. 
In fact, for $n\ge n_0$ 
$$
W_{9 n+3}\equiv\begin{cases}
(-1)^{n-1}\pmod{3^{\frac{3 n_0+1}{2}}}&\text{if $n_0$ is odd};\\
(-1)^{n-1}\pmod{3^{\frac{3 n_0}{2}}}&\text{if $n_0$ is even}. 
\end{cases}
$$ 

Since 
$$
\sum_{k=0}^n\binom{3 n+2}{3 k}x^{3 k}=\frac{1}{3}\sum_{j=0}^2(1+\omega^j x)^{3 n+2}\,,
$$ 
for $x=-1$, we have 
\begin{align*}  
\sum_{k=0}^n\binom{3 n+2}{3 k}(-1)^{k}&=\frac{1}{3}\sum_{j=0}^2(1-\omega^j)^{3 n+2}\\
&=\frac{1}{3}\big((-3\sqrt{-3})^n(1-\omega)^2+(3\sqrt{-3})^n(1-\omega^2)^2\bigr)\\
&=-\big(\omega(-3\sqrt{-3})^n+\omega^2(3\sqrt{-3})^n\bigr)\,.
\end{align*} 
Hence, 
$$
\sum_{k=0}^n\binom{3 n+2}{3 k}(-1)^{k}=\begin{cases}
(-1)^{\frac{n+1}{2}}3^{\frac{3 n+1}{2}}&\text{$n$ is odd};\\
(-1)^{\frac{n}{2}}3^{\frac{3}{2}n}&\text{$n$ is even}.
\end{cases}
$$
Since 
$$
\sum_{k=0}^n\binom{3 n+2}{3 k+1}x^{3 k+1}=\frac{1}{3}\sum_{j=0}^2\omega^{2 j}(1+\omega^j x)^{3 n+2}\,,
$$ 
we have 
\begin{align*}  
\sum_{k=0}^n\binom{3 n+2}{3 k+1}(-1)^{k-1}&=\frac{1}{3}\sum_{j=0}^2\omega^{2 j}(1-\omega^j)^{3 n+2}\\
&=\frac{1}{3}\big(\omega^2(-3\sqrt{-3})^n(1-\omega)^2+\omega(3\sqrt{-3})^n(1-\omega^2)^2\bigr)\\
&=-\big((-3\sqrt{-3})^n+(3\sqrt{-3})^n\bigr)\,.
\end{align*} 
Hence, 
$$
\sum_{k=0}^n\binom{3 n+2}{3 k+1}(-1)^{k-1}=\begin{cases}
0&\text{$n$ is odd};\\
(-1)^{\frac{n}{2}-1}2\cdot 3^{\frac{3}{2}n}&\text{$n$ is even}.
\end{cases}
$$
Since 
$$
\sum_{k=0}^n\binom{3 n+2}{3 k+2}x^{3 k+2}=\frac{1}{3}\sum_{j=0}^2\omega^j(1+\omega^j x)^{3 n+2}\,,
$$ 
we have 
\begin{align*}  
\sum_{k=0}^n\binom{3 n+2}{3 k+2}(-1)^{k-1}8&=-\frac{8}{3}\sum_{j=0}^2\omega^j(1-\omega^j)^{3 n+2}\\
&=-\frac{8}{3}\big(\omega^2(-3\sqrt{-3})^n(1-\omega)^2+\omega(3\sqrt{-3})^n(1-\omega^2)^2\bigr)\\
&=8\big(\omega^2(-3\sqrt{-3})^n+\omega(3\sqrt{-3})^n\bigr)\,.
\end{align*} 
Hence, 
$$
\sum_{k=0}^n\binom{3 n+2}{3 k+2}(-1)^{k-1}8=\begin{cases}
(-1)^{\frac{n+1}{2}}8\cdot 3^{\frac{3 n+1}{2}}&\text{$n$ is odd};\\
(-1)^{\frac{n}{2}}8\cdot 3^{\frac{3}{2}n}&\text{$n$ is even}.
\end{cases}
$$
By Theorem \ref{prop:ex3} (i), for $n\ge 2$, we have 
\begin{align*} 
W_{9 n+6}&=-\sum_{k=0}^{3 n+1}\binom{9 n+6}{3 k}W_{3 k}\\
&=-\sum_{k=0}^{n}\binom{9 n+6}{9 k}W_{9 k}-\sum_{k=0}^{n}\binom{9 n+6}{9 k+3}W_{9 k+3}-\sum_{k=0}^{n-1}\binom{9 n+6}{9 k+6}W_{9 k+6}\\
&\equiv -0-0+(-1)^{n-1}8
=(-1)^{n-1}8\pmod{3^3}\,.  
\end{align*}
When $n=1$, we have $W_{15}=-136085041\equiv 8\pmod{3^3}$.
$$
W_{9 n+6}\equiv-1-24-3\equiv -1\pmod{3^3}\,. 
$$
In fact, for $n\ge n_0$ 
$$
W_{9 n+6}\equiv\begin{cases}
(-1)^{n-1}8\pmod{3^{\frac{3 n_0+1}{2}}}&\text{if $n_0$ is odd};\\
(-1)^{n-1}8\pmod{3^{\frac{3 n_0}{2}}}&\text{if $n_0$ is even}. 
\end{cases}
$$ 
\end{proof}


\subsection{Lehmer-Euler numbers modulo powers of three}  

A classical result on Euler numbers modulo power of two due to Stern \cite{Stern} asserts that 
\begin{equation}  
E_{2 n}\equiv E_{2 m}\pmod{2^k}\Longleftrightarrow 2 n\equiv 2 m\pmod{2^k}\,.
\label{eq:stern}   
\end{equation}    
In 1875 Stern \cite{Stern} gave a brief sketch of a proof of (\ref{eq:stern}), then Frobenius \cite{Frobenius} amplified Stern's sketch in 1910. In 1979 Ernvall \cite{Ernvall} said that he could not understand Frobenius' proof and gave his own proof using the umbral calculus. In 2000 an induction proof of (\ref{eq:stern}) was given by Wagstaff \cite{Wagstaff}. Sun \cite{Sun05} obtained an explicit congruence for Euler numbers modulo powers of two to give a new proof of (\ref{eq:stern}). Liu \cite{Liu08} gave a still new proof of (\ref{eq:stern}) by using the Euler numbers of order $2$ and the central factorial numbers.  

In the case of the Lemer-Euler numbers, the following is predicted.  
$$
W_{3 n}\equiv W_{3 m}\pmod{3^k}\Longleftrightarrow 3 n\equiv 3 m\pmod{3^k}\,.
$$ 
But unfortunately this does not hold. 
Together with Theorems \ref{th:congmod3} and \ref{th:9}, indeed, we have the following.  

\begin{Prop}
We have 
\begin{align*}
&\{W_{3 n}\pmod{3^2}\}_{n=0}^\infty=1,\overline{8,1}\quad\hbox{(period $2$)}\,,\\
&\{W_{3 n}\pmod{3^3}\}_{n=0}^\infty=1,\overline{\overleftrightarrow{26, 19, 26}, \overleftrightarrow{1, 8, 1}}\quad\hbox{(period $6$)}\,,\\
&\{W_{3 n}\pmod{3^4}\}_{n=0}^\infty\\
&=1,\overline{\overleftrightarrow{80, 19, 26, 28, 62, 28, 26, 19, 80}, \overleftrightarrow{1, 62, 55, 53, 19, 53, 55, 62, 1}}\quad\hbox{(period $2\cdot 3^2$)}\,,\\
&\{W_{3 n}\pmod{3^5}\}_{n=0}^\infty=1,\overleftarrow{242, 19, 188, 109, 62, 28, 107, 19, 80, 82, 224, 217, 53}, \\
&\quad\overrightarrow{181, 53, 217, 224, 82, 80, 19, 107, 28, 62, 109, 188, 19, 242},\\
&\quad\overleftarrow{1, 224, 55, 134, 181, 215, 136, 224, 163, 161, 19, 26, 190, 62, 190, 26, 19, 161, }\\
&\quad\overrightarrow{163, 224, 136, 215, 181, 134, 55, 224, 1}\quad\hbox{(period $2\cdot3^3$)}\,. 
\end{align*}
\label{prp:lec0}
\end{Prop}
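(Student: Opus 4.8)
The plan is to reduce the entire proposition to a single \emph{anti-periodicity} congruence and to read off everything else from it. Concretely, I would prove that for every $k\ge 2$ and every sufficiently large $n$,
$$
W_{3(n+3^{k-2})}\equiv -W_{3n}\pmod{3^k}\,.
$$
Granting this, applying it twice gives $W_{3(n+2\cdot 3^{k-2})}\equiv W_{3n}\pmod{3^k}$, so $\{W_{3n}\bmod 3^k\}$ is eventually periodic with period dividing $2\cdot 3^{k-2}$; and the half-shift relation is exactly the statement that the second half-block is the complement $3^k-(\cdot)$ of the first, which is the pairing indicated by the opposed arrows in the display. The cases $k=2,3$ are already contained in Theorems \ref{th:congmod3} and \ref{th:9} (the period-$2$ and period-$6$ patterns), so the work is to push the same machinery to $k=4,5$ (periods $18$ and $54$).

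For the anti-periodicity congruence I would argue by strong induction on $n$, mirroring the proof of Theorem \ref{th:9}. Writing $h=3^{k-2}$ and applying the recurrence of Theorem \ref{prop:ex3} (i) to both $W_{3(n+h)}$ and $W_{3n}$ gives
$$
W_{3(n+h)}+W_{3n}=-\sum_{j=0}^{n+h-1}\binom{3n+3h}{3j}W_{3j}-\sum_{j=0}^{n-1}\binom{3n}{3j}W_{3j}\,.
$$
Splitting the first sum at $j=h$, substituting $j=h+j'$, and invoking the induction hypothesis $W_{3(h+j')}\equiv -W_{3j'}$ collapses the right-hand side to $\sum_{j=0}^{n-1}\bigl(\binom{3n+3h}{3h+3j}-\binom{3n}{3j}\bigr)W_{3j}$ plus the boundary block $-\sum_{0\le j<h}\binom{3n+3h}{3j}W_{3j}$. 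I would then show, via a lifting of Lucas's Theorem (Lemma \ref{Lucas}) to prime-power moduli, that $\binom{3n+3h}{3h+3j}\equiv\binom{3n}{3j}\pmod{3^k}$ for the relevant $j$, so the main sum cancels, while the boundary block is governed by the root-of-unity identities $\sum_{j=0}^2\omega^{aj}(1\pm\omega^j)^{3n+b}$ used repeatedly in Theorem \ref{th:9}: since $v_3\bigl((1-\omega)^{3n}\bigr)=\tfrac{3}{2}n$, these twisted binomial sums acquire $3$-adic valuation exceeding $k$ once $n\ge n_0(k)$, hence vanish modulo $3^k$.

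Having established eventual periodicity of period $2\cdot 3^{k-2}$, the remaining assertions become finite verifications. I would (a) compute the first full period of $W_{3n}\bmod 3^k$ directly from the recurrence, which simultaneously pins down the listed residues and exhibits the two palindromic half-blocks (equivalently, the reflection $W_{3n}\equiv -W_{3(1-n)}\pmod{3^k}$, indices taken modulo the period, read off the computed list); (b) check that the anti-periodicity already holds for the small indices $0\le n<n_0(k)$ below the induction threshold, so that the period in fact begins at $n=0$ as displayed; and (c) rule out every proper divisor of $2\cdot 3^{k-2}$ as a period, e.g.\ by noting $W_0\equiv 1\not\equiv -1\equiv W_{3\cdot 3^{k-2}}\pmod{3^k}$ and comparing one further discriminating pair of residues.

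I expect the main obstacle to be the binomial step: Lucas's Theorem only gives information modulo $3$, so the inductive cancellation genuinely needs a prime-power lifting (of Jacobsthal--Kazandzidis type), and for indices $j$ close to $n$ the addition of $3^{k-1}$ produces carries in the base-$3$ expansion that can break the clean congruence $\binom{3n+3h}{3h+3j}\equiv\binom{3n}{3j}$. This is precisely why the smooth induction runs only for $n\ge n_0(k)$ and must be supplemented by the finite computation in step (b). Keeping the $3$-adic valuations of the boundary root-of-unity sums above $k$ while simultaneously absorbing the carry corrections and the low-$n$ bookkeeping is the delicate point of the argument.
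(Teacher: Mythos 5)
Your reduction of the proposition to the anti-periodicity congruence $W_{3(n+3^{k-2})}\equiv -W_{3n}\pmod{3^k}$ is consistent with all the listed data and, granted that congruence plus your finite checks (a)--(c), the proposition would indeed follow. The gap is in the proof of the anti-periodicity itself: both structural claims it rests on are false. First, the termwise cancellation $\binom{3n+3h}{3h+3j}\equiv\binom{3n}{3j}\pmod{3^k}$ fails already modulo $3$, and not only ``for $j$ close to $n$'': with $k=3$, $h=3$, $n=3$, $j=0$ one has $\binom{18}{9}=48620\equiv 2\pmod 3$ while $\binom{9}{0}=1$. The reason is structural, not a matter of carries: by Lemma \ref{Lucas}, adding $3^{k-1}$ to both arguments replaces the digit factor $\binom{a}{b}$ at position $k-1$ by $\binom{a+1}{b+1}$, which differs from $\binom{a}{b}$ in general (e.g.\ $a=1$, $b=0$), even when no carry occurs. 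Moreover, the Jacobsthal--Kazandzidis congruences you invoke compare $\binom{pn}{pm}$ with $\binom{n}{m}$, i.e.\ a \emph{multiplicative} scaling of both arguments; they say nothing about the \emph{additive} shift you need, so no ``prime-power lifting of Lucas'' of the required form exists. Second, the boundary block $-\sum_{0\le j<h}\binom{3n+3h}{3j}W_{3j}$ does not vanish: for $k=2$ it is identically $-\binom{3n+3}{0}W_0=-1$, and for $k=3$, $n=3$ it is $-\bigl(1-\binom{18}{3}+19\binom{18}{6}\bigr)\equiv 17\pmod{27}$; nor can it be controlled by the identities $\sum_{j=0}^{2}\omega^{aj}(1\pm\omega^j)^{N}$, since those evaluate \emph{complete} sums over all residues, not a sum truncated at $j<h$.

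What actually happens is a global cancellation between the two pieces you tried to kill separately. In the instance $k=3$, $n=3$ (where $W_0=1$, $W_3=-1$, $W_6=19$):
\begin{equation*}
\underbrace{-\Bigl(1-\tbinom{18}{3}+19\tbinom{18}{6}\Bigr)}_{\equiv\,17}
\;+\;\underbrace{\sum_{j=0}^{2}\Bigl(\tbinom{18}{9+3j}-\tbinom{9}{3j}\Bigr)W_{3j}}_{\equiv\,19-12+3\cdot 19\,\equiv\,10}
\;\equiv\;27\equiv 0\pmod{27}\,,
\end{equation*}
so the identity holds only in aggregate, with neither summand zero and the individual differences $19,12,3$ not divisible by $27$ (the first not even by $3$). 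Establishing this aggregate cancellation is precisely the hard content, and with the tools available it forces you to substitute the inductively known residues of \emph{every} $W_{3j}$ and evaluate the resulting complete twisted binomial sums. That is exactly the paper's route: it proves, per residue class, $W_{27n}\equiv(-1)^n$, $W_{27n+3}\equiv(-1)^{n-1}$, $W_{27n+6}\equiv(-1)^n 19$, \dots $\pmod{3^4}$ by induction on the recurrence of Theorem \ref{prop:ex3} (i), using explicit evaluations of the full sums $\sum_{k}\binom{27n}{27k+r}(\pm1)^k c_r\pmod{3^4}$ (which are nonzero only for one or two small $n$), and analogously modulo $3^5$. Your periodicity/palindrome bookkeeping and minimal-period checks are fine, but the core inductive step must be reorganized along those lines; as written, it does not go through.
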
 

\noindent 
{\it Remark.}  
For $k\ge 6$, the palindromic property does not hold for $\{W_{3 n}\pmod{3^k}\}_{n=0}^\infty$.

Based on the above results, we predict that the following can be generally stated. 

\begin{Conj}  
If $3 n\equiv 3 m\pmod{2\cdot 3^k}$, then $W_{3 n}\equiv W_{3 m}\pmod{3^{k+1}}$.   
\label{conj:lec0}
\end{Conj}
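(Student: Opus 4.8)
The plan is to prove the Conjecture by induction on $k$, treating it as a statement about the self-similar behaviour of the sequence $a_n:=W_{3n}$ in base $3$. First I would record the elementary reduction that $3n\equiv 3m\pmod{2\cdot3^k}$ is equivalent to $n\equiv m\pmod{2\cdot3^{k-1}}$, so that by telescoping through the arithmetic progression $m,\,m+2\cdot3^{k-1},\,m+2\cdot 2\cdot3^{k-1},\dots$ it suffices to prove the single-step assertion
\begin{equation*}
W_{3(n+2\cdot3^{k-1})}\equiv W_{3n}\pmod{3^{k+1}}\qquad(n\ge0,\ k\ge1).
\tag{$\star$}
\end{equation*}
The base case $k=1$ is exactly Theorem \ref{th:congmod3}, since $(-1)^{n+2}=(-1)^n$, and the case $k=2$ is packaged in Theorem \ref{th:9}; these also supply the finitely many small-index values that the induction will need as initial data.

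The engine of the argument is a valuation estimate generalizing \eqref{eq:oh01} and the alternating binomial sums computed throughout the proof of Theorem \ref{th:9}. For $N\ge1$ and $\ell\in\{0,1,2\}$ set $B_\ell(N)=\sum_{3j+\ell\le N}\binom{N}{3j+\ell}(-1)^j$. The roots-of-unity filter gives
$$
(-1)^\ell B_\ell(N)=\frac13\sum_{i=0}^2\omega^{-i\ell}(1-\omega^i)^N=\frac13\Bigl(\omega^{-\ell}(1-\omega)^N+\omega^{-2\ell}(1-\omega^2)^N\Bigr),
$$
the $i=0$ term dropping out. I would then read off the $3$-adic valuation from the factorization $3=-\omega^2(1-\omega)^2$ in $\mathbb{Z}[\omega]$: since $(1-\omega)$ is the ramified prime above $3$ with $v_{(1-\omega)}(3)=2$, the factor $(1-\omega)^N$ forces $v_3\bigl(B_\ell(N)\bigr)\ge \tfrac{N}{2}-1$, with the exact parity-dependent powers of $3$ being precisely those appearing in Theorem \ref{th:9}. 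The crucial point is that this lower bound grows \emph{linearly} in $N$; the same filtering technique applied with roots of unity of order a power of $3$ yields analogous linear bounds for the refined sums in which $j$ is further restricted to a fixed residue class modulo a power of $3$.

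With the lemma in hand, the inductive step would run as in Theorem \ref{th:9} but relative to the modulus of the conjecture. Assuming $(\star)$ for all smaller exponents, the strong hypothesis is that $W_{3j}\bmod 3^{k}$ depends on $j$ only through $j\bmod 2\cdot3^{k-2}$; I would substitute the recurrence of Theorem \ref{prop:ex3}(i) for both $W_{3n'}$ and $W_{3n}$, where $n'=n+2\cdot3^{k-1}$, and organize the difference
$$
W_{3n'}-W_{3n}=-\sum_{j=0}^{n-1}\Bigl(\tbinom{3n'}{3j}-\tbinom{3n}{3j}\Bigr)W_{3j}-\sum_{j=n}^{n'-1}\tbinom{3n'}{3j}W_{3j}
$$
by grouping the index $j$ into residue classes modulo $2\cdot3^{k-2}$. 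On each class the inductive hypothesis replaces $W_{3j}$ by a constant modulo $3^{k}$, after which the remaining binomial sums are exactly of the refined $B_\ell$-type (with the Lucas congruences of Lemma \ref{Lucas} used to collapse $\binom{3n'}{\,\cdot\,}$ and $\binom{3n}{\,\cdot\,}$ onto a common smaller binomial); the valuation lemma then contributes the one extra power of $3$ needed to upgrade the modulus from $3^{k}$ to $3^{k+1}$.

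The hard part, and the reason the statement is posed as a conjecture, is making this error analysis uniform and handling the boundary sum $\sum_{j=n}^{n'-1}\binom{3n'}{3j}W_{3j}$: here $j$ ranges over the newly created top terms of the $n'$-recurrence, the binomials $\binom{3n'}{3j}$ are not uniformly small, and one cannot simply invoke the inductive hypothesis. I expect the resolution to require folding these boundary terms back into a \emph{complete} roots-of-unity sum via the vanishing identities $\sum_{j}\binom{3n'}{3j}W_{3j}=0$, so that only high-valuation $B_\ell$-type quantities survive, and then verifying that the linear valuation growth of the lemma strictly dominates the at-most-constant loss incurred by using congruences modulo $3^{k}$ in place of exact equalities. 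The Remark after Proposition \ref{prp:lec0} that the palindromic structure fails for $k\ge6$ signals that these valuation bounds are close to sharp, so the estimates will have to be tracked carefully rather than wasted.
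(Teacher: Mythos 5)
The first thing to settle is that there is no ``paper's own proof'' to compare against: the statement is Conjecture \ref{conj:lec0}, which the paper explicitly leaves open, offering only the finite verifications of Theorems \ref{th:congmod3}, \ref{th:9} and Proposition \ref{prp:lec0} (observed periods $2$, $6$, $2\cdot 3^2$, $2\cdot 3^3$ modulo $3^2,\dots,3^5$). Your proposal must therefore stand alone as a complete argument, and it does not. The framework is sensible and does extend the paper's actual method: the reduction to the single-step claim $W_{3(n+2\cdot 3^{k-1})}\equiv W_{3 n}\pmod{3^{k+1}}$, the base cases from Theorems \ref{th:congmod3} and \ref{th:9}, and the combination of the recurrence in Theorem \ref{prop:ex3} (i) with roots-of-unity filtering as in (\ref{eq:oh01}). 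But the two steps carrying all the weight are asserted by analogy, and the analogy fails quantitatively.

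The decisive gap is the claimed valuation lemma for sums over residue classes modulo $2\cdot 3^{k-2}$. The bound $v_3\bigl(B_\ell(N)\bigr)\ge \frac{N}{2}-1$ behind (\ref{eq:oh01}) rests on the fact that $1-\omega$ generates the ramified prime of $\mathbb{Q}(\omega)$, with $v_3(1-\omega)=\frac12$. To isolate $j$ in a class modulo $2\cdot 3^{k-2}$ you must filter with roots of unity $\eta$ of order $3^{k-1}$, for which $v_3(1-\eta)=\frac{1}{2\cdot 3^{k-2}}$, and the filter carries a prefactor $3^{-(k-1)}$; worse, unless the sign pattern conspires exactly, terms of the form $(1+\eta)^N$ appear, and these are $3$-adic units. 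Even in the favorable signed case the termwise bound degenerates to $\frac{N}{2\cdot 3^{k-2}}-(k-1)$, and in your single-step claim the smallest relevant exponent is $N=3n'=6\cdot 3^{k-1}$ (take $n=0$), giving a guaranteed valuation of only $10-k$, strictly below the required $k+1$ once $k\ge 5$ --- precisely where the paper's computations stop and near where its Remark records that the palindromic structure fails ($k\ge 6$). So there is no ``analogous linear bound'': the linear growth rate decays geometrically in $k$, and any proof must instead exhibit cancellation \emph{among} the root-of-unity terms, which is exactly the open content of the conjecture. Secondary gaps compound this: Lemma \ref{Lucas} is a congruence modulo $3$ only, whereas you need to collapse $\binom{3n'}{\cdot}$ against $\binom{3n}{\cdot}$ modulo $3^{k+1}$ (Granville/Davis--Webb-type results, not cited; the paper's own aside that such congruences hold modulo $3^l$ for large $n$ is likewise unsubstantiated), and you explicitly leave the boundary sum $\sum_{j=n}^{n'-1}\binom{3n'}{3j}W_{3j}$ unresolved. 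Your diagnosis of where the difficulty sits is accurate, but the proposal is a program, not a proof; the statement remains a conjecture both in the paper and after your attempt.
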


\begin{proof}[Proof of Proposition \ref{prp:lec0}.]
Concerning the case of $W_{3 n}\pmod{3^4}$, we can show 
\begin{align*}
&W_{27 n}\equiv(-1)^n,\quad W_{27 n+3}\equiv(-1)^{n-1},\\
&W_{27 n+6}\equiv W_{27 n+24}\equiv(-1)^n 19,\quad W_{27 n+15}\equiv(-1)^{n-1}19,\\
&W_{27 n+9}\equiv W_{27 n+21}\equiv(-1)^n 26,\\ 
&W_{27 n+12}\equiv W_{27 n+18}\equiv(-1)^n 28\pmod{3^4}\,. 
\end{align*}
For example, similarly to the proof of Theorem \ref{th:9}, the first identity can be shown by Theorem \ref{prop:ex3} (i) and the facts of the following relations. 
\begin{align*}
&\sum_{k=0}^n\binom{27 n}{27 k}(-1)^k\equiv
\begin{cases}
0&\text{if $n=1$ or $n\ge 3$}\\
-18&\text{if $n=2$},
\end{cases}\\
&\sum_{k=0}^{n-1}\binom{27 n}{27 k+3}(-1)^{k-1}\equiv
\begin{cases}
-9&\text{if $n=1$}\\
0&\text{if $n\ge 2$},
\end{cases}\\
&\sum_{k=0}^n\binom{27 n}{27 k+6}(-1)^k 19\equiv
\begin{cases}
36&\text{if $n=1$}\\
0&\text{if $n\ge 2$},
\end{cases}\\
&\sum_{k=0}^{n-1}\binom{27 n}{27 k+9}(-1)^{k}26\equiv
\begin{cases}
-3&\text{if $n=1$}\\
9&\text{if $n=2$}\\
0&\text{if $n\ge 3$},
\end{cases}\\
&\sum_{k=0}^n\binom{27 n}{27 k+12}(-1)^k 28\equiv
\begin{cases}
45&\text{if $n=1$}\\
0&\text{if $n\ge 2$},
\end{cases}\\
&\sum_{k=0}^{n-1}\binom{27 n}{27 k+15}(-1)^{k-1}19\equiv
\begin{cases}
36&\text{if $n=1$}\\
0&\text{if $n\ge 2$},
\end{cases}\\
&\sum_{k=0}^n\binom{27 n}{27 k+18}(-1)^k 28\equiv
\begin{cases}
3&\text{if $n=1$}\\
9&\text{if $n=2$}\\
0&\text{if $n\ge 3$},
\end{cases}\\
&\sum_{k=0}^{n-1}\binom{27 n}{27 k+21}(-1)^{k}26\equiv
\begin{cases}
-36&\text{if $n=1$}\\
0&\text{if $n\ge 2$},
\end{cases}\\
&\sum_{k=0}^{n-1}\binom{27 n}{27 k+24}(-1)^{k}19\equiv
\begin{cases}
9&\text{if $n=1$}\\
0&\text{if $n\ge 2$}
\end{cases}
\pmod{3^4}\,.
\end{align*}
Other identities are similarly shown and their proofs are omitted.  
\end{proof}

\section{Incomplete Lehmer-Euler numbers} 

The various incomplete numbers have been introduced and studied (see, e.g., \cite{Ko16,KLM16,KR}).   
In order to generalize the numbers $W_n$, we shall introduce two kinds of incomplete Lehmer-Euler numbers.   
For $m\ge 1$, define the incomplete Lehmer-Euler numbers  
$W_{n,\le m}$ and $W_{n,\ge m}$ by 
\begin{equation}  
\sum_{n=0}^\infty W_{n,\le m}\frac{t^n}{n!}=\frac{1}{1+\sum_{l=1}^m\frac{t^{3 l}}{(3 l)!}}
\label{def:resteuler} 
\end{equation}  
and 
\begin{equation}  
\sum_{n=0}^\infty W_{n,\ge m}\frac{t^n}{n!}=\frac{1}{1+\sum_{l=m}^\infty\frac{t^{3 l}}{(3 l)!}}\,,
\label{def:assoeuler} 
\end{equation}
respectively. 
When $m\to\infty$, $W_n=W_{n,\le\infty}$, and 
when $m=1$, $W_n=W_{n,\ge 1}$. 
Hence, both incomplete numbers are reduced to the original Lehmer-Euler numbers. 
Moreover, note that $W_{n,\le m}=W_{n,\ge m}=0$ if and only if $3\nmid n$.  

These incomplete Lehmer-Euler numbers satisfy the following recurrence and explicit formulas and determinant expressions. 
The proposition can be proven similar to Theorem \ref{thm:3property}, 
thus we omit the proof. 

\begin{Prop}
\begin{itemize}
\item[\rm (i)](Recurrence formulas)
We have the following.

\noindent{\rm (a)} 
$$
W_{3 n,\le m}=-\sum_{k=\max\{n-m,0\}}^{n-1}\binom{3 n}{3 k}W_{3 k,\le m}\quad(n\ge 1) 
$$
with $W_{0,\le m}=1$. 
\label{prop:ex3r}

\noindent{\rm (b)} 
$$
W_{3 n,\ge m}=-\sum_{k=0}^{n-m}\binom{3 n}{3 k}W_{3 k,\ge m}\quad(n\ge m) 
$$
with $W_{0,\ge m}=1$ and $W_{3,\ge m}=\cdots=W_{3 m-3,\ge m}=0$.   
\label{prop:ex3a} 
\item[\rm (ii)](Explicit formulas)
For $n,m\ge 1$,  we have the following:

\noindent{\rm (a)}
$$  
W_{3 n,\le m}=(3 n)!\sum_{k=1}^n(-1)^k\sum_{i_1+\cdots+i_k=n\atop 1\le i_1,\dots,i_k\le m}\frac{1}{(3 i_1)!\cdots(3 i_k)!}\,.  
$$
\label{th:ex11r}

\noindent{\rm (b)}
$$ 
W_{3 n,\ge m}=(3 n)!\sum_{k=1}^n(-1)^k\sum_{i_1+\cdots+i_k=n\atop i_1,\dots,i_k\ge m}\frac{1}{(3 i_1)!\cdots(3 i_k)!}\,.  
$$ 
\label{th:ex11a}
\item[\rm (iii)](Determinant expressions)
For $n\ge m\ge 1$, we have the following:

\noindent{\rm (a)}
$$
W_{3 n,\le m}=(-1)^n(3 n)!\left|\begin{array}{cc} 
\underbrace{ 
\begin{array}{ccc}
\frac{1}{3!}&1&0\\
\vdots&&\ddots\\
\frac{1}{(3 m)!}&&\\
0&\ddots&\\ 
&&\ddots\\
&&0
\end{array}
}_{n-m} 
\underbrace{ 
\begin{array}{ccc}
&&\\
\ddots&&\\
&\ddots&\\
&\ddots&0\\
&&1\\
\frac{1}{(3 m)!}&\cdots&\frac{1}{3!}
\end{array}
}_{m} 
\end{array} 
\right|\,. 
$$
\label{th:rleu}

\noindent{\rm (b)}
$$
W_{3 n,\ge m}=(-1)^n(3 n)!\left|\begin{array}{cc} 
\underbrace{ 
\begin{array}{ccc}
0&1&0\\
\vdots&&\ddots\\
0&&\\
\frac{1}{(3 m)!}&\ddots&\\ 
\vdots&&\ddots\\
\frac{1}{(3 n)!}&\cdots&\frac{1}{(3 m)!}
\end{array}
}_{n-m+1} 
\underbrace{ 
\begin{array}{ccc}
&&\\
\ddots&&\\
&\ddots&\\
&\ddots&0\\
&&1\\
0&\cdots&0
\end{array}
}_{m-1} 
\end{array} 
\right|\,. 
$$
\end{itemize}
\end{Prop}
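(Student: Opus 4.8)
The plan is to follow the proof of Theorem~\ref{thm:3property} line by line, replacing the full kernel $\sum_{l\ge 0}t^{3l}/(3l)!$ by the two truncated kernels appearing in (\ref{def:resteuler}) and (\ref{def:assoeuler}). Write $g_{\le m}(t)=1+\sum_{l=1}^{m}t^{3l}/(3l)!$ and $g_{\ge m}(t)=1+\sum_{l=m}^{\infty}t^{3l}/(3l)!$, so that each incomplete generating function is $1/g$. The single fact driving everything is the location of the nonzero coefficients: the coefficient of $t^{3l}$ in $g_{\le m}$ is $1/(3l)!$ for $0\le l\le m$ and $0$ for $l>m$, while that in $g_{\ge m}$ is $1/(3l)!$ for $l=0$ or $l\ge m$ and $0$ for $1\le l\le m-1$. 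All three parts are then obtained by the same manipulations as before, restricted to these two index windows.

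For part (i) I would start from the functional equation $\bigl(\sum_{n}W_{n,\le m}t^{n}/n!\bigr)g_{\le m}(t)=1$ and extract the coefficient of $t^{3n}/(3n)!$ via the Cauchy product. Because $W_{n,\le m}=0$ unless $3\mid n$, only multiples of three contribute, and after clearing denominators by $(3n)!$ the product coefficient becomes $\sum_{k}\binom{3n}{3k}W_{3k,\le m}$ with $k$ restricted to $0\le n-k\le m$, that is $\max\{n-m,0\}\le k\le n$. Since the coefficient vanishes for $n\ge 1$, isolating the $k=n$ term $W_{3n,\le m}$ yields recurrence (a). The same computation with $g_{\ge m}$, whose nonzero coefficients sit at $n-k=0$ and $n-k\ge m$ (i.e. $k\le n-m$), gives recurrence (b); the initial data $W_{0,\ge m}=1$ and $W_{3,\ge m}=\cdots=W_{3m-3,\ge m}=0$ are forced since $1/g_{\ge m}=1+O(t^{3m})$.

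For part (ii) I would expand the reciprocals geometrically, $1/g=\sum_{k\ge 0}(-1)^{k}(g-1)^{k}$. Raising $g_{\le m}-1=\sum_{l=1}^{m}t^{3l}/(3l)!$ to the $k$th power and collecting the coefficient of $t^{3n}$ produces the sum over $i_1+\cdots+i_k=n$ with each part in $[1,m]$, and $k$ runs only to $n$ because every part is at least one; multiplying by $(3n)!$ gives (a), and the identical argument with parts in $[m,\infty)$ gives (b). For part (iii) I would feed the recurrences of (i) into the standard Hessenberg-determinant identity already used in Theorem~\ref{thm:3property}(iii): a sequence with $a_0=1$ and $\sum_{j=0}^{n}c_j a_{n-j}=0$ (where $c_0=1$, $n\ge 1$) satisfies $a_n=(-1)^{n}\det M_n$, where $M_n$ is the lower Hessenberg Toeplitz matrix with $(i,j)$ entry $c_{i-j+1}$, all ones on the superdiagonal, and zeros above it.

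The routine content is immediate; the genuine care is needed in part (iii), where one must check that the two truncation windows produce exactly the zero patterns drawn in the displayed matrices. The clean bookkeeping is that here $c_l=1/(3l)!$ for $1\le l\le m$ and $c_l=0$ for $l>m$ in the $\le m$ case, so once $i-j+1>m$ the entry $c_{i-j+1}$ vanishes and the lower-left triangular region is cleared, reproducing the zeros and the column-width labels $n-m$ and $m$; in the $\ge m$ case $c_l=0$ for $1\le l\le m-1$, which clears the corresponding upper-left block and accounts for the widths $n-m+1$ and $m-1$. Cofactor expansion of $\det M_n$ along the last row then returns recurrence (a) (resp. (b)), and the base case $n=m$ of each determinant is checked directly, completing the induction.
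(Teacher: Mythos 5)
Your proposal is correct, and it is precisely the argument the paper has in mind: the paper omits the proof, stating only that the Proposition "can be proven similar to Theorem~1," and your coefficient extraction from $1/g_{\le m}$ and $1/g_{\ge m}$ for (i), geometric expansion of the reciprocal for (ii), and inductive Hessenberg-determinant identity (with the truncation windows producing the displayed zero blocks) for (iii) is exactly that analogy carried out in detail. No gaps; the cofactor expansion along the last row does reproduce the recurrences, so the induction closes as you describe.
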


The other explicit formulas, due to Trudi's formula, are also 
obtained for the incomplete cases, and the proof is similar 
to Theorem \ref{th:otherexp}, thus we omit the proof.

\begin{Prop}  
For $n\ge m\ge 1$, we have 
\begin{align*}
W_{3n,\le m}&=(3 n)!\sum_{t_1+2 t_2+\cdots+m t_m=n}\binom{t_1+\cdots+t_m}{t_1,\dots,t_m}
\prod_{l=1}^{m}\left(\frac{-1}{(3l)!}\right)^{t_l}\\
{\it and}\hspace{7mm}& &\\
W_{3n,\ge m}&=(3 n)!\sum_{m t_m+(m+1)t_{m+1}+\cdots+n t_n=n}\binom{t_m+t_{m+1}+\cdots+t_n}{t_m,t_{m+1},\dots,t_n}
\prod_{l=m}^{n}\left(\frac{-1}{(3l)!}\right)^{t_l}\,. 
\end{align*}
\label{th300}  
\end{Prop}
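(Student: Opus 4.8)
The plan is to apply Trudi's formula to the Toeplitz--Hessenberg determinant expressions for $W_{3n,\le m}$ and $W_{3n,\ge m}$ recorded in Proposition (iii)(a) and (iii)(b), in exactly the way Theorem \ref{th:otherexp} is obtained from the determinant expression in Theorem \ref{thm:3property}(iii). Recall that Trudi's formula evaluates the Hessenberg determinant with $1$'s on the superdiagonal and constant diagonals carrying $a_1,a_2,\dots$ (so the first column reads $a_1,a_2,\dots,a_n$ downward) as
\[
\left|\begin{array}{ccccc}
a_1 & 1 & & & \\
a_2 & a_1 & 1 & & \\
\vdots & & \ddots & \ddots & \\
a_{n-1} & \cdots & & a_1 & 1 \\
a_n & \cdots & & a_2 & a_1
\end{array}\right|
=\sum_{t_1+2t_2+\cdots+nt_n=n}\binom{t_1+\cdots+t_n}{t_1,\dots,t_n}\prod_{l=1}^{n}\bigl((-1)^{l-1}a_l\bigr)^{t_l}\,.
\]
The underlying mechanism is the multinomial expansion of $1/(1+S)$ with $S=\sum_{l\ge1}(-1)^l a_l x^l$, which is precisely the recurrence satisfied by these Hessenberg determinants.

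First I would treat $W_{3n,\le m}$. The determinant in Proposition (iii)(a) is the displayed Toeplitz--Hessenberg determinant with $a_l=\frac{1}{(3l)!}$ for $1\le l\le m$ and $a_l=0$ for $l>m$. Substituting these values, every term of the Trudi sum in which some $t_l$ with $l>m$ is nonzero carries a factor $a_l^{t_l}=0$ and hence vanishes, so the summation collapses to $t_1+2t_2+\cdots+mt_m=n$. It then remains to absorb the prefactor $(-1)^n=(-1)^{\sum_l l\,t_l}=\prod_l(-1)^{l t_l}$ into the product: since $(-1)^{l t_l}\bigl((-1)^{l-1}\bigr)^{t_l}=(-1)^{t_l}$, one gets $\prod_{l=1}^{m}\bigl((-1)^{l-1}a_l\bigr)^{t_l}\cdot(-1)^n=\prod_{l=1}^{m}(-a_l)^{t_l}=\prod_{l=1}^{m}\bigl(\tfrac{-1}{(3l)!}\bigr)^{t_l}$, which yields the first claimed identity.

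For $W_{3n,\ge m}$ the argument is identical, now starting from Proposition (iii)(b), which is the Toeplitz--Hessenberg determinant with $a_l=\frac{1}{(3l)!}$ for $l\ge m$ and $a_l=0$ for $1\le l<m$; that is, the first $m-1$ subdiagonals (equivalently the first $m-1$ entries of the first column) are zero. Here the vanishing entries force $t_l=0$ for $l<m$, restricting the Trudi sum to $m t_m+(m+1)t_{m+1}+\cdots+n t_n=n$ (the constraint $l\,t_l\le n$ already bounds the index by $l\le n$), and the same sign bookkeeping produces $\prod_{l=m}^{n}\bigl(\tfrac{-1}{(3l)!}\bigr)^{t_l}$, giving the second identity.

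The only step I expect to require genuine care, and which I would verify explicitly, is the matching of the diagonal-index conventions so that the zero pattern of the entries lines up with the stated summation ranges: in the $\ge m$ case one must confirm that the block display of Proposition (iii)(b) really is the Toeplitz--Hessenberg matrix whose diagonal sequence $a_l$ is supported exactly on $l\ge m$, so that the leading zeros annihilate precisely the indices $l<m$ and nothing more. Beyond this bookkeeping, no new obstacle arises; the computation is the same multinomial expansion that proves Trudi's formula, which is why the proof can legitimately be described as similar to that of Theorem \ref{th:otherexp}.
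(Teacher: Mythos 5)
Your proposal is correct and is exactly the route the paper intends: the paper omits this proof, stating only that it follows from Trudi's formula in the same way as Theorem \ref{th:otherexp}, and your argument fills in precisely that computation, applying Trudi's formula to the Toeplitz--Hessenberg determinants of Proposition (iii) with $a_l=\frac{1}{(3l)!}$ supported on $1\le l\le m$ (resp.\ $l\ge m$), with the sign bookkeeping $(-1)^n\prod_l\bigl((-1)^{l-1}a_l\bigr)^{t_l}=\prod_l(-a_l)^{t_l}$ handled correctly. No gaps.
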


\section{Higher order Lehmer-Euler numbers}

For positive integers $r$ and $\alpha$, 
the {\it generalized Lehmer-Euler numbers} of order $\alpha$, which is denoted by $W_{r,n}^{(\alpha)}$ is defined by 
\begin{equation}  
\sum_{n=0}^\infty W_{r,n}^{(\alpha)}\frac{t^n}{n!}=\left(\frac{r}{\sum_{j=0}^{r-1}e^{\zeta^j t}}\right)^\alpha=\left(\sum_{l=0}^\infty\frac{t^{r l}}{(r l)!}\right)^{-\alpha}\,, 
\label{def:hle}
\end{equation}
where $\zeta=\zeta_r$ is the primitive $r$-th root of unity.  
Hence, when $r=3$ and $\alpha=1$, $W_n=W_{1,n}^{(1)}$ is the original Lehmer-Euler number.  
Since 
$$
(1+w)^{-\alpha}=\sum_{l=0}^\infty\binom{\alpha+l-1}{l}(-w)^l\quad(|w|<1)\,, 
$$
putting 
$$
1+w=\frac{1}{r}\sum_{j=0}^{r-1}e^{\zeta^j t}\,, 
$$
we have 
\begin{align*}
&W_{r,n}^{(\alpha)}
=\left.\frac{d^n}{d t^n}\left(\frac{r}{\sum_{j=0}^{r-1}e^{\zeta^j t}}\right)^\alpha\right|_{t=0}\\
&=\sum_{l=0}^n\binom{\alpha+l-1}{l}\left.\frac{d^n}{d t^n}\left(1-\frac{1}{r}\sum_{j=0}^{r-1}e^{\zeta^j t}\right)^l\right|_{t=0}\\
&=\sum_{l=0}^n\binom{\alpha+l-1}{l}\left.\frac{d^n}{d t^n}\sum_{k=0}^l\binom{l}{k}\left(-\frac{1}{r}\sum_{j=0}^{r-1}e^{\zeta^j t}\right)^k\right|_{t=0}\\
&=\sum_{l=0}^n\binom{\alpha+l-1}{l}\sum_{k=0}^l\left(-\frac{1}{r}\right)^k\binom{l}{k}\\
&\qquad\times\left.\frac{d^n}{d t^n}\sum_{i_1+i_2+\cdots+i_r=k\atop i_1,i_2,\dots,i_r\ge 0}\frac{k!}{i_1!i_2!\dots i_r}\prod_{j=1}^r e^{\zeta^{j-1}i_j t}\right|_{t=0}\\
&=\sum_{k=0}^n\left(-\frac{1}{r}\right)^k\sum_{i_1+i_2+\cdots+i_r=k\atop i_1,i_2,\dots,i_r\ge 0}\frac{k!}{i_1!i_2!\dots i_r!}\left(\sum_{j=1}^r\zeta^{j-1}i_j\right)^n\\
&\qquad\times\binom{\alpha+l-1}{l}\binom{l}{k}\\
&=\sum_{k=0}^n\left(-\frac{1}{r}\right)^k\binom{\alpha+k-1}{k}\sum_{i_1+i_2+\cdots+i_r=k\atop i_1,i_2,\dots,i_r\ge 0}\frac{k!}{i_1!i_2!\dots i_r!}\left(\sum_{j=1}^r\zeta^{j-1}i_j\right)^n\\
&\qquad\times\sum_{l=0}^{n-k}\binom{\alpha+k+l-1}{l}\\
&=\sum_{k=0}^n\left(-\frac{1}{r}\right)^k\binom{\alpha+k-1}{k}\binom{\alpha+n}{n-k}\sum_{i_1+i_2+\cdots+i_r=k\atop i_1,i_2,\dots,i_r\ge 0}\frac{k!}{i_1!i_2!\dots i_r!}\left(\sum_{j=1}^r\zeta^{j-1}i_j\right)^n\,.
\end{align*}

When $\alpha=1$, the generalized Lehmer-Euler numbers $W_{r,n}$ are reduced as 
$$
W_{r,n}=\sum_{k=0}^n\left(-\frac{1}{r}\right)^k\binom{n+1}{n-k}\sum_{i_1+i_2+\cdots+i_r=k\atop i_1,i_2,\dots,i_r\ge 0}\frac{k!}{i_1!i_2!\dots i_r!}\left(\sum_{j=1}^r\zeta^{j-1}i_j\right)^n\,.
$$
If $r=3$, we have an expression of the Lehmer-Euler numbers of higher-order: 
$$
W_n=\sum_{k=0}^n\left(-\frac{1}{3}\right)^k\binom{\alpha+n}{n-k}\binom{\alpha+k-1}{k}\sum_{i_1+i_2+i_3=k\atop i_1,i_2,i_3\ge 0}\frac{k!}{i_1!i_2!i_3!}\left(i_1+i_2\omega+i_3\omega^2\right)^n\,.
$$
If $r=2$, we have an expression of the Euler numbers of higher-order (\cite{Luo05}):  
\begin{equation}
E_n^{(\alpha)}=\sum_{k=0}^n\left(-\frac{1}{2}\right)^k\binom{\alpha+n}{n-k}\binom{\alpha+k-1}{k}\sum_{j=0}^k\binom{k}{j}(k-2 j)^n\,. 
\label{eq:luo}
\end{equation}

\section{A new polynomial sequence and its properties}   

For a complex number $x$ and a non-negative integer $k$, the polynomial sequence $\Delta(x,k)$ is defined by 
\begin{equation}
\Delta(x,k+1)=(x+1)(2 x+1)\Delta(x+1,k)-x^2\Delta(x,k)\quad(k\ge 0)
\label{def:polyseq}
\end{equation}
with $\Delta(x,0)=1$. 
By using the recurrence relation in (\ref{def:polyseq}), we have 
\begin{align*} 
\Delta(x,1)&=x^2+3 x+1\,,\\
\Delta(x,2)&=x^4+10 x^3+25 x^2+20 x+5\,,\\
\Delta(x,3)&=x^6+21 x^5+140 x^4+385 x^3+490 x^2+287 x+61\,.\\
\end{align*}
The central factorial numbers $t(n,k)$ of the first kind (also known as the Stirling numbers of the first kind with level $2$ \cite{KRV1}) are given by the formula 
\begin{equation}  
x\left(x+\frac{n}{2}-1\right)\left(x+\frac{n}{2}-2\right)\cdots\left(x-\frac{n}{2}+1\right)=\sum_{k=0}^\infty t(n,k)x^k
\label{def:cfn1}
\end{equation} 
and its exponential generating function is given by 
\begin{equation}   
\left(2\log\left(\frac{x}{2}+\sqrt{\frac{x^2}{4}+1}\right)\right)^k=k!\sum_{n=k}^\infty t(n,k)\frac{x^n}{n!}\,.
\label{gf:cfn1}
\end{equation} 
(see, e.g., \cite{Riordan}). It follows from (\ref{def:cfn1}) and (\ref{gf:cfn1}) that 
\begin{equation}  
t(n,k)=t(n-2,k-2)-\frac{(n-2)^2}{4}t(n-2,k)
\label{rec:cfn1}
\end{equation}
with $t(n,0)=0$ ($n>0$), $t(n,n)=1$ ($n\ge 0$) and 
$t(n,k)=0$ ($n+k$ is odd, $k>n$ or $k<0$).  

The central factorial numbers $T(n,k)$ of the second kind (also known as the Stirling numbers of the second kind with level $2$ \cite{KRV2}) are given by the formula 
\begin{equation}  
x^n=\sum_{k=0}^\infty T(n,k)x\left(x+\frac{k}{2}-1\right)\left(x+\frac{k}{2}-2\right)\cdots\left(x-\frac{k}{2}+1\right)
\label{def:cfn2}
\end{equation} 
and its exponential generating function is given by 
\begin{equation}   
(e^{\frac{x}{2}}-e^{-\frac{x}{2}})^k=k!\sum_{n=k}^\infty T(n,k)\frac{x^n}{n!}\,.
\label{gf:cfn2}
\end{equation} 
(see, e.g., \cite{Riordan}). It follows from (\ref{def:cfn2}) and (\ref{gf:cfn2}) that 
\begin{equation}  
T(n,k)=T(n-2,k-2)+\frac{k^2}{4}T(n-2,k)
\label{rec:cfn2}
\end{equation}
with $T(n,0)=0$ ($n>0$), $T(n,n)=1$ ($n\ge 0$) and 
$T(n,k)=0$ ($n+k$ is odd, $k>n$ or $k<0$).  

In this section, we show the identity between Euler numbers and the central factorial numbers of the second kind in terms of the polynomial sequence $\Delta(x,k)$ in (\ref{def:polyseq}). 

\begin{theorem}
For non-negative integers $n$ and $k$, we have 
$$
E_{2 n+2 k}=\sum_{j=0}^n\frac{(-1)^{j-k}(2 j)!\Delta(j,k)}{2^j}T(2 n,2 j)\,. 
$$
\label{th:eu-cfn2}
\end{theorem}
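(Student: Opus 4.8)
The plan is to proceed by induction on $k$, writing $S(n,k)$ for the right-hand side
$$
S(n,k)=\sum_{j=0}^n\frac{(-1)^{j-k}(2 j)!\,\Delta(j,k)}{2^j}T(2 n,2 j)
$$
and proving $S(n,k)=E_{2n+2k}$ for every $n$. For the base case $k=0$, since $\Delta(j,0)=1$, the claim reduces to the ``linearized'' Euler expansion $E_{2n}=\sum_{j=0}^n\frac{(-1)^j(2j)!}{2^j}T(2n,2j)$. I would obtain this straight from the generating function: putting $u=e^{t/2}-e^{-t/2}$ gives $u^2=2(\cosh t-1)$, hence $\cosh t=1+u^2/2$ and
$$
\frac{1}{\cosh t}=\frac{1}{1+u^2/2}=\sum_{j=0}^\infty\frac{(-1)^j}{2^j}\bigl(e^{t/2}-e^{-t/2}\bigr)^{2j}.
$$
Expanding each $\bigl(e^{t/2}-e^{-t/2}\bigr)^{2j}$ by (\ref{gf:cfn2}) and reading off the coefficient of $t^{2n}/(2n)!$ yields the base case, the sum truncating at $j=n$ because $T(2n,2j)=0$ for $j>n$.

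For the inductive step I would substitute the defining recurrence (\ref{def:polyseq}), in the form $\Delta(j,k)=(j+1)(2j+1)\Delta(j+1,k-1)-j^2\Delta(j,k-1)$, into $S(n,k)$ and split it into two sums. In the first I reindex by $i=j+1$; combined with the elementary identity $i(2i-1)(2i-2)!=(2i)!/2$ this normalizes the weight to the same shape as the second sum, and after matching signs both carry the common factor $\tfrac{(-1)^{j-k}(2j)!}{2^j}\Delta(j,k-1)$. Collecting them gives
$$
S(n,k)=-\sum_{j}\frac{(-1)^{j-k}(2 j)!}{2^j}\Delta(j,k-1)\bigl[T(2n,2j-2)+j^2T(2n,2j)\bigr].
$$
The bracket is precisely the central-factorial recurrence (\ref{rec:cfn2}) read as $T(2n+2,2j)=T(2n,2j-2)+j^2 T(2n,2j)$, so it collapses to $T(2n+2,2j)$; absorbing the minus sign via $-(-1)^{j-k}=(-1)^{j-(k-1)}$ shows the result is exactly $S(n+1,k-1)$.

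Hence $S(n,k)=S(n+1,k-1)$, and iterating this relation down to $k=0$ gives $S(n,k)=S(n+k,0)=E_{2(n+k)}=E_{2n+2k}$, which is the theorem. The main obstacle is the index bookkeeping rather than any deep idea: one must check that the two sums, whose ranges differ (the reindexed first sum runs to $j=n+1$, the second only to $j=n$), merge into a single clean sum over $0\le j\le n+1$. This rests on the boundary terms $j=0$ and $j=n+1$, where the vanishing $T(2n+2,0)=0$ and $T(2n,2n+2)=0$ together with the normalizations $T(2n,2n)=T(2n+2,2n+2)=1$ must conspire so that the $T$-recurrence holds at the endpoints; once these are verified the collapse to $S(n+1,k-1)$ is exact.
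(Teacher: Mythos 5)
Your proof is correct and takes essentially the same route as the paper: the relation $S(n,k)=S(n+1,k-1)$ you derive is exactly the paper's inductive step (same split of $\Delta$ via the recurrence (\ref{def:polyseq}), same reindexing $j\mapsto j+1$ using $(2j+2)!/2^{j+1}=(j+1)(2j+1)(2j)!/2^j$, same recombination via (\ref{rec:cfn2}), with the same boundary checks at $j=0$ and $j=n+1$), just read in the opposite direction and then telescoped down to $k=0$ rather than phrased as induction on $k$. The only point where you go beyond the paper is the base case: the paper cites \cite{LZ} for (\ref{eq:lz}), while you give a self-contained formal power series derivation from $1/\cosh t=\left(1+\tfrac{1}{2}\left(e^{t/2}-e^{-t/2}\right)^{2}\right)^{-1}$ together with the generating function (\ref{gf:cfn2}), which is a valid and welcome addition.
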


\noindent 
{\it Remark.}  
When $k=0$, the identity in Theorem \ref{th:eu-cfn2} is reduced to 
\begin{equation}
E_{2 n}=\sum_{j=0}^n\frac{(-1)^{j}(2 j)!}{2^j}T(2 n,2 j)\,. 
\label{eq:lz}
\end{equation}
(see \cite{LZ}).  
When $n=0$, it is reduced to 
\begin{equation}
E_{2 k}=(-1)^k\Delta(0,k)\,. 
\label{eq:e2k}
\end{equation}

The identity involving the central factorial numbers of the first kind and the polynomial sequence $\Delta(x,k)$ can be given as follows.  

\begin{theorem}
For a non-negative integer $n$, we have 
$$
\sum_{j=0}^n(-4)^{n-j}t(2 n+1,2 j+1)\Delta(x,j)=(x+1)(x+2)\cdots(x+2 n)\,. 
$$
\label{th:eu-cfn1}
\end{theorem}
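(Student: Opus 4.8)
The plan is to prove the identity by induction on $n$, showing that the left-hand side, which I abbreviate as
$$
L_n(x):=\sum_{j=0}^n(-4)^{n-j}t(2n+1,2j+1)\Delta(x,j),
$$
and the right-hand side $R_n(x):=(x+1)(x+2)\cdots(x+2n)$ satisfy one and the same recurrence in $n$, together with the common initial value $L_0(x)=R_0(x)=1$ (here $t(1,1)=1$ and $\Delta(x,0)=1$, while the right side is the empty product).

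The first step is to extract a coefficient recurrence from (\ref{rec:cfn1}). Writing $a_{n,j}:=t(2n+1,2j+1)$ and specializing $n\mapsto 2n+1$, $k\mapsto 2j+1$ in (\ref{rec:cfn1}) gives
$$
a_{n,j}=a_{n-1,j-1}-\frac{(2n-1)^2}{4}\,a_{n-1,j},
$$
with the boundary conventions inherited from the paper: $a_{n,j}=0$ for $j<0$ or $j>n$, and $a_{n,n}=1$. Substituting this into $L_n(x)$ and reindexing, the term $a_{n-1,j-1}$ shifts $j$ by one and absorbs one factor of $-4$, while the term $-\tfrac{(2n-1)^2}{4}a_{n-1,j}$ contributes a factor $(2n-1)^2$ after pulling out $-4$; both resulting sums run from $0$ to $n-1$. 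Collecting, every summand carries $(-4)^{(n-1)-j}a_{n-1,j}$ times the combination $\Delta(x,j+1)+(2n-1)^2\Delta(x,j)$. The crucial move is to invoke the defining recurrence (\ref{def:polyseq}) for $\Delta(x,j+1)$, giving
$$
\Delta(x,j+1)+(2n-1)^2\Delta(x,j)=(x+1)(2x+1)\Delta(x+1,j)+\bigl[(2n-1)^2-x^2\bigr]\Delta(x,j).
$$
Summing against $(-4)^{(n-1)-j}a_{n-1,j}$ recognizes the two pieces as $L_{n-1}(x+1)$ and $L_{n-1}(x)$, yielding the clean recurrence
$$
L_n(x)=(x+1)(2x+1)\,L_{n-1}(x+1)+\bigl[(2n-1)^2-x^2\bigr]L_{n-1}(x).
$$
This reduction is the heart of the argument and the step where the two supplied recurrences mesh.

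It remains to verify that $R_n$ obeys the identical recurrence. Using $R_{n-1}(x+1)=(x+2n-1)\prod_{i=2}^{2n-2}(x+i)$ and $R_{n-1}(x)=(x+1)\prod_{i=2}^{2n-2}(x+i)$, I factor $(x+1)\prod_{i=2}^{2n-2}(x+i)$ out of the right-hand side of the recurrence, leaving the scalar $(2x+1)(x+2n-1)+(2n-1)^2-x^2$, which collapses to $x^2+(4n-1)x+2n(2n-1)=(x+2n-1)(x+2n)$. Hence the recurrence reproduces $R_n(x)=(x+1)\cdots(x+2n)$ exactly. Since $L_0=R_0=1$, induction gives $L_n=R_n$ for all $n\ge 0$, which is the assertion of Theorem \ref{th:eu-cfn1}. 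The only genuine obstacle is foreseeing that the coefficient recurrence (\ref{rec:cfn1}) and the polynomial recurrence (\ref{def:polyseq}) interlock so that the inhomogeneous factor produced is precisely the one that factors the new endpoints $(x+2n-1)(x+2n)$; once this alignment is spotted, the remaining manipulations are routine polynomial algebra.
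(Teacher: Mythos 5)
Your proposal is correct and takes essentially the same route as the paper: both arguments are an induction on $n$ whose inductive step splits $t(2n+1,2j+1)$ via the recurrence (\ref{rec:cfn1}), shifts the summation index, and invokes (\ref{def:polyseq}) so that the induction hypothesis can be applied at both $x$ and $x+1$, ending with the same factorization of $(2x+1)(x+2n-1)+(2n-1)^2-x^2$ into $(x+2n-1)(x+2n)$. Your packaging of this as ``$L_n$ and $R_n$ satisfy one common recurrence with the same initial value'' is only a cosmetic reorganization of the paper's direct computation of the left-hand side at $n+1$.
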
 

\noindent 
{\it Remark.}  
When $x=0$, by (\ref{eq:e2k}), it is reduced to 
$$
\sum_{j=0}^n 4^{n-j}t(2 n+1,2 j+1)E_{2 j}=(-1)^n(2 n)!\,. 
$$

\begin{proof}[Proof of Theorem \ref{th:eu-cfn2}.] 
We shall prove by induction. 
When $k=0$, the identity is valid because of (\ref{eq:lz}) in \cite{LZ}. 
Suppose that the identity is valid for some $k$. By (\ref{def:polyseq}) and (\ref{rec:cfn2}), we have 
\begin{align*}
&E_{2 n+2(k+1)}=E_{2(n+1)+2 k}\\
&=\sum_{j=0}^{n+1}\frac{(-1)^{j-k}(2 j)!\Delta(j,k)}{2^j}T(2 n+2,2 j)\\
&=\sum_{j=0}^{n+1}\frac{(-1)^{j-k}(2 j)!\Delta(j,k)}{2^j}\bigl(T(2 n,2 j-2)+j^2 T(2 n,2 j)\bigr)\\
&=\sum_{j=0}^{n}\frac{(-1)^{j-k+1}(2 j+2)!\Delta(j+1,k)}{2^{j+1}}T(2 n,2 j)\\
&\qquad +\sum_{j=0}^{n}\frac{(-1)^{j-k}(2 j)!j^2\Delta(j,k)}{2^j}T(2 n,2 j)\\
&=\sum_{j=0}^{n}\frac{(-1)^{j-k+1}(2 j)!}{2^{j}}\bigl((j+1)(2 j+1)\Delta(j+1,k)-j^2\Delta(j,k)\bigr)T(2 n,2 j)\\
&=\sum_{j=0}^{n}\frac{(-1)^{j-(k+1)}(2 j)!\Delta(j,k+1)}{2^{j}}T(2 n,2 j)\,.
\end{align*}
Hence, the identity is valid for $k+1$. 
\end{proof} 

\begin{proof}[Proof of Theorem \ref{th:eu-cfn1}.] 
When $n=0$, by $t(1,1)\Delta(x,0)=1$, the identity is valid.  
Suppose that the identity is valid for some $n$. By (\ref{def:polyseq}) and (\ref{rec:cfn1}), we have 
\begin{align*}
&\sum_{j=0}^{n+1}(-4)^{n-j+1}t(2 n+3,2 j+1)\Delta(x,j)\\
&=\sum_{j=0}^{n+1}(-4)^{n-j+1}\left(t(2 n+1,2 j-1)-\frac{(2 n+1)^2}{4}t(2 n+1,2 j+1)\right)\Delta(x,j)\\
&=\sum_{j=0}^{n+1}(-4)^{n-j+1}t(2 n+1,2 j-1)\Delta(x,j)\\
&\qquad +\sum_{j=0}^{n}(-4)^{n-j}(2 n+1)^2 t(2 n+1,2 j+1)\Delta(x,j)\\
&=\sum_{j=0}^{n}(-4)^{n-j}t(2 n+1,2 j+1)\Delta(x,j+1)\\
&\qquad +(2 n+1)^2(x+1)(x+2)\cdots(x+2 n)\\
&=\sum_{j=0}^{n}(-4)^{n-j}t(2 n+1,2 j+1)\bigl((x+1)(2 x+1)\Delta(x+1,j)-x^2\Delta(x,j)\bigr)\\
&\qquad +(2 n+1)^2(x+1)(x+2)\cdots(x+2 n)\\
&=(x+1)(2 x+1)(x+2)(x+3)\cdots(x+2 n+1)\\
&\qquad -x^2(x+1)(x+2)\cdots(x+2 n)
+(2 n+1)^2(x+1)(x+2)\cdots(x+2 n)\\
&=(x+1)(x+2)\cdots(x+2 n+1)(x+2 n+2)\,. 
\end{align*}
Hence, the identity is valid for $k+1$. 
\end{proof}



\end{document}